\newtheorem{theorem}{Theorem}[section]
\newtheorem{property}[theorem]{Property}
\newtheorem{lemma}[theorem]{Lemma}
\newtheorem{remark}[theorem]{Remark}
\newenvironment{proof}{\par\textbf{Proof:}\\}{\hfill$\square$\par}
\numberwithin{equation}{section}
\newcommand{\lz}{\sum_{j=1}^N\lambda_j(1-z_j)}
\newcommand{\ee}{\textrm{e}}
\newcommand{\dd}{\,\textrm{d}}
\providecommand{\href}[2]{#2}
\begin{document}
\title{A Polling Model with Multiple Priority Levels\footnote{The research was done in the framework of the BSIK/BRICKS project, and of the European Network of Excellence Euro-FGI.}}
\author{M.A.A. Boon\footnote{\textsc{Eurandom} and Department of Mathematics and Computer Science, Eindhoven University of Technology, P.O. Box 513, 5600MB Eindhoven, The Netherlands}\\\href{mailto:marko@win.tue.nl}{marko@win.tue.nl} \and I.J.B.F. Adan\footnotemark[2]\\\href{mailto:iadan@win.tue.nl}{iadan@win.tue.nl} \and O.J. Boxma\footnotemark[2]\\\href{mailto:boxma@win.tue.nl}{boxma@win.tue.nl}}
\date{August, 2009}
\maketitle

\begin{abstract}
In this paper we consider a single-server cyclic polling system. Between visits to successive queues, the server is delayed by a random switch-over time. The order in which customers are served in each queue is determined by a priority level that is assigned to each customer at his arrival. For this situation the following service disciplines are considered: gated, exhaustive, and globally gated. We study the cycle time distribution, the waiting times for each customer type, the joint queue length distribution of all priority classes at all queues at polling epochs, and the steady-state marginal queue length distributions for each customer type.

\bigskip\noindent\textbf{Keywords:} Polling, priority levels, queue lengths, waiting times
\end{abstract}

\section{Introduction}\label{intro}

A polling model is a single-server system in which the server visits $N$ queues $Q_1, \dots, Q_N$ in cyclic order. Customers that arrive at $Q_i$ are referred to as type $i$ customers.
The special feature of the model considered in the present paper is that, within a customer type, we distinguish multiple priority levels. More specifically, we study a polling system which consists of $N$ queues, $Q_1, \dots, Q_N$, and $K_i$ priority levels in $Q_i$. The exhaustive, gated and globally gated service disciplines are studied.

Our motivation to study a polling model with priorities is that scheduling through the introduction of priorities in a polling system can improve the performance of the system significantly without having to purchase additional resources \cite{wierman07}.
Priority polling systems can be used to study the Bluetooth and IEEE 802.11 wireless LAN protocols, or scheduling policies at routers and I/O subsystems in web servers. For example, Quality-of-Service (QoS) has become a very important issue in wireless LAN protocols because delays in responses or data transmissions do not have a major impact on, e.g., web browsing or email traffic, but video transmissions and Voice over Wireless LAN (VoWLAN) are very sensitive to delays or loss of data. The 802.11e amendment introduces different priority levels to differentiate between types of data in order to improve QoS for streaming traffic. In production environments the introduction of priorities can lead to smaller mean waiting times (see, e.g., \cite{winandsPhD}). It is shown in \cite{boonadanboxma2queues2008,wierman07} that assigning highest priority to jobs with a service requirement below a certain threshold level may reduce overall mean waiting times.
Priority polling models also can be used to study traffic intersections where conflicting traffic flows face a green light simultaneously; e.g. traffic which takes a left turn may have to give right of way to conflicting traffic that moves straight on, even if the traffic light is green for both traffic flows. The last application area of polling models with priorities that we mention here, is health care (see, e.g., \cite{cicin2001}).

Although there is an extensive amount of literature available on polling systems (see, e.g., the surveys of Takagi \cite{takagi1988qap}, Levy and Sidi \cite{levysidi90}, and Vishnevskii and Semenova \cite{vishnevskiisemenova06}), very few papers treat priorities in polling models. Most of these papers only provide approximations or focus on pseudo-conservation laws \cite{fournierrosberg91,shimogawatakahashi88}. Wierman, Winands and Boxma \cite{wierman07} have obtained exact mean waiting time results using the Mean Value Analysis (MVA) framework for polling systems, developed in \cite{winands06}. The MVA framework can only be used to find the first moment of the waiting time distribution for each customer type, and the mean residual cycle time. In a recent report \cite{boonadanboxma2queues2008} we have studied a polling model with two queues that are served according to the exhaustive, gated or globally gated service discipline. The first of these two queues contains customers of two priority classes. The main contribution of \cite{boonadanboxma2queues2008} is the derivation of Laplace-Stieltjes Transforms (LSTs) of the distributions of the marginal waiting times for each customer type; in particular it turned out to be possible to obtain exact expressions for the waiting time distributions of both high and low priority customers at a queue of a polling system. Probability Generating Functions (GFs) have been derived for the joint queue length distribution at polling epochs, and for the steady-state marginal queue length distribution of the number of customers at an arbitrary epoch. In the present paper these results are generalised to a polling model with $N$ queues and $K_i$ priority levels in $Q_i$ $(i=1,\dots,N)$. The actual probability distributions can be obtained by numerical inversion of the LSTs and GFs. A very efficient technique for numerical inversion of GFs and LSTs in polling models is discussed in \cite{choudhurywhitt96}.

The present paper is structured as follows: Section \ref{general} gathers known results of nonpriority polling models which we shall need in the present study. Sections \ref{priorities} (gated and exhaustive), and \ref{globallygated} (globally gated) give new results on the priority polling model. In each of the sections we successively discuss the joint queue length distribution at polling epochs, the cycle time distribution, the marginal queue length distributions and waiting time distributions. The mean waiting times are given at the end of each section.

\section{The nonpriority polling model}\label{general}

The model that is considered in this section, is a polling model with $N$ queues ($Q_1, \dots, Q_N$) without priorities.
Each queue is served according to either the gated, or the exhaustive service discipline. The gated service discipline states that during a visit to $Q_i$, only those type $i$ customers are served who are present at the moment that the server arrives at $Q_i$. All type $i$ customers that arrive during the visit to $Q_i$ will be served during the next cycle. A cycle is the time between two successive visit beginnings (or completions) at a queue. The exhaustive service discipline states that when the server arrives at $Q_i$, all type $i$ customers that are present at that polling epoch, and all type $i$ customers that arrive during this particular visit to $Q_i$, are served until no type $i$ customer is present in the system. In Section \ref{globallygated} we also consider the globally gated service discipline, which is similar to the gated service discipline, except for the fact that the symbolic gate is being set at the beginning of a cycle for \emph{all} queues. This means that during a cycle only those customers will be served that were present at the beginning of that cycle.

Customers of type $i$ arrive at $Q_i$ according to a Poisson process with arrival rate $\lambda_i$. Service times can follow any distribution. The LST of the distribution of the generic service time $B_i$ of type $i$ customers is denoted by $\beta_i(\cdot)$.
The fraction of time that the server is serving customers of type $i$ equals $\rho_i := \lambda_i E(B_i)$. Switches of the server from $Q_i$ to $Q_{i+1}$ (all indices modulo $N$), require a switch-over time $S_i$. The LST of this switch-over time distribution is denoted by $\sigma_i(\cdot)$. The fraction of time that the server is working (i.e., not switching) is $\rho := \sum_{i=1}^N \rho_i$. We assume that $\rho < 1$, which is a necessary and sufficient condition for the steady state distributions of cycle times, queue lengths and waiting times to exist. We assume that all the usual independence assumptions apply to the model under consideration.

This model has been extensively investigated. Tak\'acs \cite{takacs68} studied this model, but with only two queues, without switch-over times and only with the exhaustive service discipline. Cooper and Murray \cite{coopermurray69} analysed this polling system for any number of queues, and for both gated and exhaustive service disciplines. Eisenberg \cite{eisenberg72} obtained results for a polling system with switch-over times (but only exhaustive service) by relating the GFs of the joint queue length distributions at visit beginnings, visit endings, service beginnings and service endings. Resing \cite{resing93} and Fuhrmann \cite{fuhrmann81} both pointed out the relation between polling systems and Multitype Branching Processes (MTBPs). Resing \cite{resing93} considers the use of MTBPs with immigration in each state, which makes it possible to relate polling systems with and without switch-over times. His results can be applied to polling models in which each queue satisfies the following property:

\begin{property}\label{resingproperty}
If the server arrives at $Q_i$ to find $k_i$ customers there, then during the course of the server's visit, each of these $k_i$ customers will effectively be replaced in an i.i.d. manner by a random population having probability generating function $h_i(z_1,\dots,z_N)$, which can be any $N$-dimensional probability generating function.
\end{property}

We use this property, and the relation to Multitype Branching Processes, to find results for our polling system with multiple priority levels, and gated, globally gated, and exhaustive service discipline. Notice that, unlike the gated and exhaustive service disciplines, the globally gated service discipline does not satisfy Property \ref{resingproperty}. But the results obtained by Resing also hold for a more general class of polling systems, namely those which satisfy the following (weaker) property that is formulated in \cite{semphd}:

\begin{property}\label{borstproperty}
If there are $k_i$ customers present at $Q_i$ at the beginning (or the end) of a visit to $Q_{\pi(i)}$, with $\pi(i) \in \{1, \dots, N\}$, then during the course of the visit to $Q_i$, each of these $k_i$ customers will effectively be replaced in an i.i.d. manner by a random population having probability generating function $h_i(z_1,\dots,z_N)$, which can be any $N$-dimensional probability generating function.
\end{property}

Globally gated and gated are special cases of the  synchronised gated service discipline, which states that only customers in $Q_i$ will be served that were present at the moment that the server reaches the ``parent queue'' of $Q_i$: $Q_{\pi(i)}$. For gated service, $\pi(i) = i$, for globally gated service, $\pi(i) = 1$. The synchronised gated service discipline is discussed in \cite{khamisy92}, but no observation is made that this discipline is a member of the class of polling systems satisfying Property \ref{borstproperty} which means that results as obtained in \cite{resing93} can be extended to this model. 

Borst and Boxma \cite{borst97} combined the results of Resing \cite{resing93} and Eisenberg \cite{eisenberg72} to find a relation between the GFs of the marginal queue length distribution for polling systems with and without switch-over times, expressed in the Fuhrmann-Cooper queue length decomposition form \cite{fuhrmanncooper85}.

\subsection{Joint queue length distribution at polling epochs}\label{generaljoint}

As discussed in \cite{resing93}, polling systems which satisfy Property \ref{resingproperty} can be modelled as a MTBP. If the polling model has switch-over times, as is assumed in the present paper, we are dealing with a MTBP with immigration in each state. The immigration is formed by the customers that arrive during the switch-over times and all of their descendants. Polling models without switch-over times can be modelled as a MTBP with immigration in state zero (when the server is idle) only. Although the present paper studies a polling  model with switch-over times, it is possible to relate results from a model with switch-over times to a model without switch-over times, as is shown in \cite{borst97}.

For now, we choose the beginning of a visit to $Q_1$ as start of a cycle. Property \ref{resingproperty} states that each customer present at the beginning of a cycle will be ``replaced'' by customers of type $1, \dots, N$ during that cycle according to the probability generating function $h_i(z_1,\dots,z_N)$, which depends on the service discipline. If $Q_i$ $(i=1, \dots, N)$ receives gated service, a customer of type $i$ present at the beginning of the cycle is replaced by the type $1, \dots, N$ customers that arrive during his service: $h_i(z_1,\dots,z_N) = \beta_i(\lz)$. For exhaustive service, a type $i$ customer will not just be replaced by all type $1, \dots, N$ customers that arrive during his service, but also all customers that arrive during the service of all type $i$ customers that have arrived during the service of this tagged customer, etc., until no type $i$ customer is present in the system. The consequence is that this type $i$ customer has been replaced by the type $j$ customers $(j = 1, \dots, N; j \neq i)$ that have arrived during the busy period (BP) of type $i$ customers that is initiated by the particular type $i$ customer. This results in the expression $h_i(z_1,\dots,z_N) = \pi_i(\sum_{j\neq i} \lambda_j(1-z_j))$, where $\pi_i(\cdot)$ is the LST of a BP distribution in an $M/G/1$ system with only type $i$ customers, so it is the root in $(0,1]$ of the equation $\pi_i(\omega) = \beta_i(\omega + \lambda_i(1 - \pi_i(\omega)))$, $\omega \geq 0$ (cf. \cite{cohen82}, p. 250).

In order to find the joint queue length distribution at the beginning of a cycle, we have to define the immigration GF and the offspring GF analogous to \cite{resing93}. The first generation of offspring consists of the customers that have effectively replaced the customers present at the beginning of the cycle.
The offspring GFs for queues $N, N-1, \dots, 1$ are given below.
\begin{align*}
f^{(N)}(z_1, \dots, z_N) &= h_N(z_1, \dots, z_{N}),\\
f^{(i)}(z_1, \dots, z_N) &= h_i(z_1, \dots, z_{i},f^{(i+1)}(z_1,\dots,z_N),\dots,f^{(N)}(z_1,\dots,z_N)), \qquad i=1,\dots,N-1.\\
\end{align*}
We define the GF for the $n^\textrm{th}$ generation of offspring recursively:
\begin{align*}
f_n(z_1, \dots, z_N) &= (f^{(1)}(f_{n-1}(z_1, \dots, z_N)), \dots, f^{(N)}(f_{n-1}(z_1, \dots, z_N))), \\
f_0(z_1, \dots, z_N) &= (z_1, \dots, z_N).
\end{align*}
The immigration GF is the GF of the joint distribution of the numbers of customers that are present at the beginning of a cycle due to the immigration during the switch-over times of the previous cycle. These customers can be divided into two groups: the customers that arrived during a switch \emph{after} the service of their queue, and offspring of customers that arrived during a switch \emph{before} the service of their queue. The immigration GFs are:
\begin{align*}
g^{(N)}(z_1, \dots, z_N) &= \sigma_N(\lz), \\
g^{(i)}(z_1, \dots, z_N) &= \sigma_{i}(\sum_{j=1}^{i}\lambda_j(1-z_j)+\sum_{j=i+1}^N\lambda_j(1-f^{(j)}(z_1, \dots, z_N))), \qquad i=1,\dots,N-1.\\
\end{align*}
The total immigration GF is the product of these GFs:
\[
g(z_1, \dots, z_N) = \prod_{i=1}^N g^{(i)}(z_1, \dots, z_N).
\]
This leads to the following recursive expression for the GF of the steady-state joint queue length at the beginning of a cycle (starting with a visit to $Q_1$) :
\begin{equation}
P_1(z_1, \dots, z_N) = P_1(f_1(z_1, \dots, z_N))g(z_1, \dots, z_N).\label{vbirecursive}
\end{equation}
Resing \cite{resing93} shows how \eqref{vbirecursive} can be used to obtain moments from the marginal queue length distribution at the beginning of a cycle. It is shown in \cite{quine70} that iteration of \eqref{vbirecursive} leads to
\begin{equation}
P_1(z_1, \dots, z_N) = \prod_{n=0}^\infty g(f_n(z_1, \dots, z_N)).\label{p1}
\end{equation}
Resing \cite{resing93} proves that this infinite product converges if and only if $\rho < 1$.

We can relate the steady-state joint queue length distribution at other visit beginnings and endings to $P_1(z_1, \dots, z_N)$. We denote the GF of the joint queue length distribution at a visit beginning to $Q_i$ by $V_{b_i}(\cdot)$, so $V_{b_1}(\cdot) = P_1(\cdot)$. The queue length GF at a visit \emph{completion} to $Q_i$ is denoted by $V_{c_i}(\cdot)$. The following relation holds:
\begin{align}
V_{b_i}(z_1, \dots, z_N) &= V_{c_{i-1}}(z_1, \dots, z_N)\sigma_{i-1}(\lz) \nonumber\\
&= V_{b_{i-1}}(z_1,\dots,z_{i-2},h_{i-1}(z_1,\dots,z_N),z_{i},\dots,z_N)\sigma_{i-1}(\lz).\label{vbi}
\end{align}
Since $V_{b_1}(\cdot)$ is known, applying \eqref{vbi} $i-1$ times expresses $V_{b_i}(\cdot)$ into $V_{b_1}(\cdot) = P_1(\cdot)$.
Applying \eqref{vbi} $N$ times gives expression \eqref{vbirecursive}.

\subsection{Cycle time}\label{generalcycletime}

The cycle time, starting at a visit \emph{beginning} to $Q_1$, is the sum of the visit times to $Q_1, \dots, Q_N$, and the switch-over times $S_1, \dots, S_N$ which are independent of the visit times. Let $\theta_i(\cdot)$ denote the LST of the distribution of the time that the server spends at $Q_i$ due to the presence of one type $i$ customer there. For gated service $\theta_i(\cdot) = \beta_i(\cdot)$, for exhaustive service $\theta_i(\cdot) = \pi_i(\cdot)$. Furthermore, we define
\begin{align*}
\psi_i(\omega) &= \omega + \lambda_i(1 - \theta_i(\omega)), &&i=1,\dots,N,\\
\psi_{i,N}(\omega) &= \psi_{i+1}(\psi_{i+2}(\dots(\psi_N(\omega)))),&& i=1,\dots,N-1,\\
\psi_{N,N}(\omega) &= \omega.&&
\end{align*}
It is shown in \cite{boxmafralixbruin08} that the LST of the distribution of the cycle time $C_1$, $\gamma_1(\cdot)$, is related to $P_1(\cdot)$ as follows:
\begin{equation*}
\gamma_1(\omega) = \prod_{i=1}^N \sigma_i(\psi_{i,N}(\omega))\,P_1(\theta_1(\psi_{1,N}(\omega)), \dots, \theta_N(\psi_{N,N}(\omega))).
\end{equation*}
In the present paper we need expressions for the LST of the distribution of the cycle time starting at the beginning of a visit to an arbitrary queue. We denote the cycle time starting with a visit beginning to  $Q_j$ by $C_j$, and its LST by $\gamma_j(\cdot)$. It is straightforward to see that
\begin{equation}
\gamma_j(\omega) = \prod_{i=1}^N \sigma_i(\psi_{i,j-1}(\omega))\,V_{b_j}(\theta_1(\psi_{1,j-1}(\omega)), \dots, \theta_N(\psi_{N,j-1}(\omega))),\label{cycletimelst}
\end{equation}
where we use the following notation:
\begin{align*}
\psi_{i,j}(\omega) &= \psi_{i+1}(\dots(\psi_{j}(\omega))), &&j=1,\dots,N; i < j,\\
\psi_{i,j}(\omega) &= \psi_{i+1}(\dots(\psi_N(\psi_1(\dots(\psi_{j}(\omega)))))),&& j=1,\dots,N; i > j,\\
\psi_{j,j}(\omega) &= \omega,&& j=1,\dots,N.&&
\end{align*}
When analysing a queue with exhaustive service, it is convenient to define $C^*_j$ to be the time between two successive visit \emph{completions} to $Q_j$. The LST of its distribution, denoted by $\gamma^*_j(\cdot)$, is:
\begin{equation}
\gamma^*_j(\omega) = \prod_{i=1}^N \sigma_i(\psi^*_{i,j}(\omega))\,V_{c_j}(\theta_1(\psi_{1,j}(\omega)), \dots, \theta_N(\psi_{N,j}(\omega))),\label{cycletimlstVc}
\end{equation}
with $V_{c_j}(z_1,\dots,z_N) = V_{b_j}(z_1, \dots, z_{j-1}, h_j(z_1,\dots,z_N), z_{j+1}, \dots, z_N)$, and
\begin{align*}
\psi^*_{i,j}(\omega) &= \psi_{i,j}(\omega), && j=1,\dots,N; i \neq j,\\
\psi^*_{j,j}(\omega) &= \psi_{j+1}(\dots(\psi_N(\psi_1(\dots(\psi_{j}(\omega)))))),&&j=1,\dots,N.
\end{align*}

\subsection{Marginal queue lengths and waiting times}

We denote the GF of the steady-state marginal queue length distribution of $Q_i$ at the visit beginning by $\widetilde{V}_{b_i}(z) = V_{b_i}(1,\dots,1,z,1,\dots, 1)$, with $z$ as $i^\textrm{th}$ argument. Analogously we define $\widetilde{V}_{c_i}(\cdot)$.
It is shown in \cite{borst97} that the steady-state marginal queue length of $Q_i$ can be decomposed into two parts: the queue length of the corresponding $M/G/1$ queue with only type $i$ customers, and the queue length at an arbitrary epoch during the intervisit period of $Q_i$, denoted by ${L_{i|I}}$. An intervisit period of $Q_i$ is the time between a visit ending at $Q_i$, and the next visit beginning at $Q_i$. Borst \cite{semphd} shows that by virtue of PASTA, ${L_{i|I}}$ has the same distribution as the number of type $i$ customers seen by an arbitrary type $i$ customer arriving during an intervisit period, which equals
\begin{equation}
E(z^{L_{i|I}}) = \frac{E(z^{L_{i|I_{\textit{begin}}}}) - E(z^{L_{i|I_{\textit{end}}}})}{(1-z)(E(L_{i|I_{\textit{end}}}) - E(L_{i|I_{\textit{begin}}}))},\label{queuelengthintervisit}
\end{equation}
where $L_{i|I_{\textit{begin}}}$ is the number of type $i$ customers at the beginning of an intervisit period $I_i$, and $L_{i|I_{\textit{end}}}$ is the number of type $i$ customers at the end of $I_i$. Since the beginning of an intervisit period coincides with the completion of a visit to $Q_i$, and the end of an intervisit period coincides with the beginning of a visit, we know the GFs for the distributions of these random variables: $\widetilde V_{c_i}(\cdot)$ and $\widetilde V_{b_i}(\cdot)$. This leads to the following expression for the GF of the steady-state queue length distribution of $Q_i$ at an arbitrary epoch, $E[z^{L_i}]$:
\begin{equation}
E[z^{L_i}] = \frac{(1-\rho_i)(1-z)\beta_i(\lambda_i(1-z))}{\beta_i(\lambda_i(1-z))-z} \cdot
\frac{\widetilde{V}_{c_i}(z) - \widetilde{V}_{b_i}(z)}{(1-z)(E(L_{i|I_{\textit{end}}}) - E(L_{i|I_{\textit{begin}}}))}.   \label{queuelengthdecomposition}
\end{equation}
Keilson and Servi \cite{keilsonservi90} show that the distributional form of Little's law can be used to find the LST of the distribution of the marginal waiting time $W_i$: $E(z^{L_i}) = E(\ee^{-\lambda_i(1-z)(W_i + B_i)})$, hence $E(\ee^{-\omega W_i}) = E[(1-\frac{\omega}{\lambda_i})^{L_i}]/\beta_i(\omega)$. This can be substituted into \eqref{queuelengthdecomposition}:
\begin{align}
E[\ee^{-\omega W_i}] &= \frac{(1-\rho_i)\omega}{\omega-\lambda_i(1-\beta_i(\omega))}\cdot
\frac{\widetilde{V}_{c_i}\left(1-\frac{\omega}{\lambda_i}\right) - \widetilde{V}_{b_i}\left(1-\frac{\omega}{\lambda_i}\right)}{(E(L_{i|I_{\textit{end}}}) - E(L_{i|I_{\textit{begin}}}))\omega/\lambda_i}   \nonumber\\
&= E[\ee^{-\omega W_{i|M/G/1}}]E\left[\left(1-\frac\omega{\lambda_i}\right)^{L_{i|I}}\right].\label{waitingtimedecomposition}
\end{align}
The interpretation of this formula is that the waiting time of a type $i$ customer in a polling model is the sum of two independent random variables: the waiting time of a customer in an $M/G/1$ queue with only type $i$ customers, $W_{i|M/G/1}$, and the remaining intervisit time for a customer that arrives at an arbitrary epoch during the intervisit time of $Q_i$.

For \emph{gated} service, the number of type $i$ customers at the beginning of a visit to $Q_i$ is exactly the number of type $i$ customers that arrived during the previous cycle, starting at $Q_i$. In terms of GFs: $\widetilde V_{b_i}(z) = \gamma_i(\lambda_i(1-z))$. The type $i$ customers at the end of a visit to $Q_i$ are exactly those type $i$ customers that arrived during this visit. In terms of GFs: $\widetilde V_{c_i}(z) = \gamma_i(\lambda_i(1-\beta_i(\lambda_i(1-z))))$. We can rewrite $E(L_{i|I_{\textit{end}}}) - E(L_{i|I_{\textit{begin}}})$ as $\lambda_i E(I_i)$, because this is the mean number of type $i$ customers that arrive during an intervisit time. In Section \ref{momentsgeneral} we observe that $\lambda_i E(I_i) = \lambda_i(1-\rho_i)E(C)$. Using these expressions we can rewrite Equation \eqref{waitingtimedecomposition} for gated service to:
\begin{equation}
E[\ee^{-\omega W_i}] = \frac{(1-\rho_i)\omega}{\omega-\lambda_i(1-\beta_i(\omega))}\cdot
\frac{\gamma_i(\lambda_i(1-\beta_i(\omega))) - \gamma_i(\omega)}{(1-\rho_i)\omega E(C)}.\label{lstwgated}
\end{equation}
For \emph{exhaustive} service, $\widetilde V_{c_i}(z) = 1$, because $Q_i$ is empty at the end of a visit to $Q_i$. The number of type $i$ customers at the beginning of a visit to $Q_i$ in an exhaustive polling system is equal to the number of type $i$ customers that arrived during the previous intervisit time of $Q_i$. Hence, $\widetilde{V}_{b_i}(z) = \widetilde{I}_i(\lambda_i(1-z))$, where $\widetilde{I}_i(\cdot)$ is the LST of the intervisit time distribution for $Q_i$. Substitution of $\widetilde{I}_i(\omega) = \widetilde{V}_{b_i}(1-\frac{\omega}{\lambda_i})$ in \eqref{waitingtimedecomposition} leads to the following expression for the LST of the steady-state waiting time distribution of a type $i$ customer in an exhaustive polling system:
\begin{equation}
E[\ee^{-\omega W_i}] = \frac{(1-\rho_i)\omega}{\omega-\lambda_i(1-\beta_i(\omega))}\cdot
\frac{1-\widetilde{I}_i(\omega)}{\omega E(I_i)}.\label{lstwexhaustive}
\end{equation}
In \cite{boonadanboxma2queues2008} another expression is found for $E[\ee^{-\omega W_i}]$. Let the cycle time $C^*_i$ be the time between two successive visit \emph{completions} to $Q_i$. The LST of the cycle time distribution, $\gamma^*_i(\cdot)$, is given by \eqref{cycletimlstVc}. In \cite{boonadanboxma2queues2008} an equation is derived that expresses $\widetilde{I}_i(\cdot)$ in $\gamma^*_i(\cdot)$:
\begin{equation}
\widetilde{I}_i(\omega) = \gamma^*_i(\omega - \lambda_i(1-\beta_i(\omega))).\label{intervisitC}
\end{equation}
Using this expression we can write $E[\ee^{-\omega W_i}]$ in terms of $E[\ee^{-\omega C^*_i}]$:
\begin{align}
E[\ee^{-\omega W_i}] &= \frac{1-\gamma^*_i(\omega - \lambda_i(1-\beta_i(\omega)))}{(\omega-\lambda_i(1-\beta_i(\omega)))E(C)}\nonumber\\
&=  E[\ee^{-(\omega - \lambda_i(1-\beta_i(\omega))) C^*_{i,\textit{res}}}],\label{lstwexhaustiveC}
\end{align}
where $C^*_{i,\textit{res}}$ is the residual length of $C^*_i$.
\begin{remark}
Substitution of  $\omega := s + \lambda_i(1-\pi_i(s))$ in \eqref{lstwexhaustiveC} leads to:
\begin{align*}
E[\ee^{-(s + \lambda_i(1-\pi_i(s))) W_i}] &= E[\ee^{-(s + \lambda_i(1-\pi_i(s)) - \lambda_i(1-\beta_i(s + \lambda_i(1-\pi_i(s))))) C^*_{i,\textit{res}}}]\\
&=E[\ee^{-(s + \lambda_i(1-\pi_i(s)) - \lambda_i(1-\pi_i(s))) C^*_{i,\textit{res}}}]\\
&=E[\ee^{-s C^*_{i,\textit{res}}}].\nonumber
\end{align*}
The interpretation, which can also be found in \cite{boxmaworkloadsandwaitingtimes89}, is that the residual cycle time consists of two components;
firstly the amount of work present at the beginning of the residual cycle, and secondly all work that arrives at $Q_i$ during the residual cycle. By virtue of PASTA, the amount of work at the beginning of the residual cycle is the waiting time of an arbitrary type $i$ customer. The work that arrives during this waiting time, is equal to the length of the busy periods generated by the customers that have arrived during this waiting time.
\end{remark}

\subsection{Moments}\label{momentsgeneral}

The focus of this paper is on LST and GF of distribution functions, not on their moments. Moments can be obtained by differentiation or Taylor series expansion, and are also discussed in \cite{wierman07}. In this subsection we will only mention some results that will be used later.

First we derive the mean cycle time $E(C)$. Unlike higher moments of the cycle time, the mean does not depend on where the cycle starts: $E(C) = \frac{E(S)}{1-\rho}$, where $S = \sum_{i=1}^N S_i$. This can easily be seen, because $1-\rho$ is the fraction of time that the server is not working, but switching. The total mean switch-over time is $\sum_{i=1}^N E(S_i) = E(S)$.

The expected length of a visit to $Q_i$ is $E(V_i) = \rho_i E(C)$, hence the mean length of an intervisit period for $Q_i$ is $E(I_i) = (1-\rho_i)E(C)$. Notice that these expectations do not depend on the service discipline used. The expected number of type $i$ customers at polling moments does depend on the service discipline. For gated service the expected number of type $i$ customers at the beginning of a visit to $Q_i$ is $\lambda_i E(C)$. For exhaustive service this is $\lambda_i E(I_i)$. 

Moments of the waiting time distribution for a type $i$ customer at an arbitrary epoch can be derived from the LSTs given by \eqref{lstwgated}, \eqref{lstwexhaustive} and \eqref{lstwexhaustiveC}. We only present the first moment:
\begin{align}
&Q_i \textrm{ gated: } & E(W_i) &= (1+\rho_i)\frac{E(C_i^2)}{2E(C)},\label{ewgated}\\
&Q_i \textrm{ exhaustive: } & E(W_i) &= \frac{E(I_i^2)}{2E(I_i)}+\frac{\rho_i}{1-\rho_i}\frac{E(B_i^2)}{2E(B_i)}\nonumber\\
& &  &= (1-\rho_i)\frac{E({C^*_i}^2)}{2E(C)}.\label{ewexhaustiveC}
\end{align}
Notice that the start of $C_i$ is the \emph{beginning} of a visit to $Q_i$ for gated service, whereas the start of $C^*_i$ is the \emph{end} of a visit for exhaustive service. Equations \eqref{ewgated} and \eqref{ewexhaustiveC} are in agreement with Equations (4.1) and (4.2) in \cite{boxmaworkloadsandwaitingtimes89}, which also gives interpretations of these equations. Although at first sight these might seem nice, closed formulas, it should be noted that second moments of the cycle time and intervisit time are not easy to determine, requiring the solution of a set of equations. MVA is an efficient technique to compute mean waiting times, the mean residual cycle time, and also the mean residual intervisit time. We refer to \cite{winands06} for an MVA framework for polling models.

\section{The priority polling model with gated and/or exhaustive service}\label{priorities}

In this section we study a polling system with $N$ queues. The service discipline of each queue is either gated, or exhaustive. Each queue contains customers that can be divided into one or more priority classes. Let $K_i$ be the total number of priority levels in $Q_i$. Customers of level 1 receive highest priority, customers of level $K_i$ receive lowest priority. Customers in $Q_i$ of priority level $k\  (k=1,\dots,K_i)$ arrive according to a Poisson process with intensity $\lambda_{ik}$, and have a service requirement $B_{ik}$ with LST $\beta_{ik}(\cdot)$.  If we consider the total number of customers in each queue at polling epochs, which means that we ignore the fact that they will be served according to their priority levels instead of ordinary First-Come-First-Served (FCFS), all the results from Subsections \ref{generaljoint} and \ref{generalcycletime} still hold. In this situation the system should be regarded as a polling system with $N$ queues where customers in $Q_i$ arrive according to a Poisson process with intensity $\lambda_i := \sum_{k=1}^{K_i}\lambda_{ik}$ and have service requirement $B_i$ with LST $\beta_i(\cdot) = \sum_{k=1}^{K_i}\frac{\lambda_{ik}}{\lambda_i} \beta_{ik}(\cdot)$.

We follow the same approach as in Section \ref{general}. First we study the joint queue length distribution of \emph{all} customer types at polling epochs, then the cycle time distribution, followed by the marginal queue length distribution and waiting time distribution, which depend on the service discipline in the selected queue. Subsection \ref{momentspriorities} provides the first moment of these distributions. The last subsection gives a pseudo-conservation law for this system.

\subsection{Joint queue length distribution at polling epochs}

Equation \eqref{p1} gives the GF of the joint queue length distribution at the beginning of a visit to $Q_1$ in a non-priority polling system. Equation \eqref{vbi} can be used to find the GF of the joint queue length distribution at the beginning of a visit to any other queue. The introduction of multiple priority levels to this polling system will only affect the queue length of a queue that is being served, but it will not affect the queue lengths at polling epochs. At the beginning of a visit to $Q_i$, a waiting type $i$ customer has priority level $k$ with probability $\lambda_{ik}/\lambda_{i}$. We can express the GF of the joint queue length distribution in the polling system with priorities, $V^\textit{prio}_{b_i}(z_{11}, \dots, z_{1K_1}, \dots, z_{N1}, \dots, z_{NK_N})$, in terms of the GF of the joint queue length distribution in the polling system without priorities, $V_{b_i}(z_{1},\dots, z_{N})$.

\begin{lemma}
\label{p1_priorities}
\begin{equation}
V^\textit{prio}_{b_i}(z_{11}, \dots, z_{1K_1}, \dots, z_{N1}, \dots, z_{NK_N}) = V_{b_i}\left(\frac{1}{\lambda_1}\sum_{k=1}^{K_1}\lambda_{1k} z_{1k}, \dots, \frac{1}{\lambda_N}\sum_{k=1}^{K_N}\lambda_{Nk} z_{Nk}\right).
\end{equation}
\end{lemma}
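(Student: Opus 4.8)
The plan is to argue probabilistically, exploiting the fact that a customer's priority level is assigned independently at arrival and influences the system only through that customer's service time. Write $L_{jk}$ for the number of type $j$ customers of priority level $k$ present at the beginning of a visit to $Q_i$, and $L_j=\sum_{k=1}^{K_j}L_{jk}$ for the corresponding total. The left-hand side of the lemma is $E\bigl[\prod_{j=1}^N\prod_{k=1}^{K_j}z_{jk}^{L_{jk}}\bigr]$, and the goal is to reduce this to a generating function in the aggregated variables $u_j:=\frac{1}{\lambda_j}\sum_{k=1}^{K_j}\lambda_{jk}z_{jk}$.

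First I would record the key structural observation already announced in the text: every customer that is present at the start of a visit to $Q_i$ is still waiting, i.e.\ it has not yet received any service since its arrival (a type $i$ customer is about to be served, a type $j\neq i$ customer waits until the server reaches $Q_j$). Hence its service time, and therefore its priority level, has not yet entered the dynamics. Consequently the totals $(L_1,\dots,L_N)$, being a function of the arrival epochs and of the service times of the already-served customers only, are independent of the priority levels of the customers that are present. Since each type $j$ arrival is, independently, of level $k$ with probability $\lambda_{jk}/\lambda_j$ by Poisson splitting, this gives that conditionally on $L_j=n_j$ for all $j$, the vector $(L_{j1},\dots,L_{jK_j})$ is multinomial with parameters $n_j$ and $(\lambda_{j1}/\lambda_j,\dots,\lambda_{jK_j}/\lambda_j)$, independently across $j$.

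Then I would use the multinomial generating function to collapse the level variables: conditionally on the totals,
\[
E\Bigl[\prod_{j=1}^N\prod_{k=1}^{K_j}z_{jk}^{L_{jk}}\ \Big|\ L_1,\dots,L_N\Bigr]=\prod_{j=1}^N u_j^{\,L_j}.
\]
Taking expectations, the left-hand side equals $E\bigl[\prod_{j=1}^N u_j^{L_j}\bigr]$, which is precisely $V_{b_i}(u_1,\dots,u_N)$ provided the totals $(L_1,\dots,L_N)$ are distributed as in the non-priority model. For this last point I would invoke order-independence: for gated service a visit length is the sum of the service times of the gated customers, and for exhaustive service it is a busy-period length, and in both cases this is insensitive to the order in which customers are served. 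Therefore the joint queue length process at polling epochs is unaffected by the priority scheduling and coincides in distribution with that of the non-priority model in which a generic type $j$ customer has service-time LST $\beta_j=\sum_k(\lambda_{jk}/\lambda_j)\beta_{jk}$, which is exactly the model whose visit-beginning generating function is $V_{b_i}$.

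I expect the main obstacle to be a clean justification of the conditional multinomial structure, because the assignment of priority levels does affect service times and hence, in principle, the evolution of the totals. The resolution is the ``not yet served'' observation above: present customers have not influenced the past, so deferring (or, for the totals, ignoring) the realisation of their levels is legitimate, which decouples the present level-composition from the totals. I would also take care to handle the independence across queues jointly rather than queue by queue, which becomes immediate once the present customers' levels are seen to be i.i.d.\ and independent of the entire history up to the polling epoch.
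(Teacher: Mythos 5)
Your proposal is correct and follows essentially the same route as the paper: the paper's proof conditions on the totals $X_{ij}$, observes (via the ``arrived since the previous visit beginning/ending'' identification, i.e.\ Poisson splitting of customers not yet served) that the level counts are conditionally multinomial with probabilities $\lambda_{jk}/\lambda_j$, and collapses the level variables through the multinomial generating function exactly as you do. The only cosmetic difference is that the insensitivity of the totals to the service order within queues, which you justify inside the proof via order-independence of visit lengths, is asserted in the paper in the prose immediately preceding the lemma (where $\lambda_i$ and the mixture LST $\beta_i$ are defined) rather than in the proof itself.
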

\begin{proof}
Let $X_{ijk}$ be the number of customers with priority level $k$ present in $Q_j$ at the beginning of a visit to $Q_i$\ $(i,j=1,\dots,N)$. Let $X_{ij} = \sum_{k=1}^{K_j}X_{ijk}$. Since the type $j$ customers with priority $k$ are exactly those customers that arrived since the previous visit beginning (for gated service; for exhaustive: visit \emph{ending}) at $Q_j$, we know that for $i,j = 1, \dots, N$:
\begin{align*}
&P(X_{ij1}=n_1, X_{ij2}=n_2, \dots, X_{ijK_j} = n_{K_j} | X_{ij} = n) \\
&= \frac{n!}{n_1!\cdots n_{K_j}!}\left(\frac{\lambda_{j1}}{\lambda_{j}}\right)^{n_1}\cdots \left(\frac{\lambda_{jK_j}}{\lambda_{j}}\right)^{n_{K_j}},
\end{align*}
under the condition $\sum_{k=1}^{K_j} n_k = n$.

Hence, for $i,j=1,\dots,N$:
\begin{align*}
&E[z_{j1}^{X_{ij1}} \cdots z_{jK_j}^{X_{ijK_j}} | X_{ij} = n)\\
&= \sum_{n_1=0}^\infty \dots \sum_{n_{K_j}=0}^\infty z_{j1}^{n_{1}} \cdots z_{jK_j}^{n_{K_j}} P(X_{ij1}=n_1, X_{ij2}=n_2, \dots, X_{ijK_j} = n_{K_j} | X_{ij} = n) \\
&= \sum_{\begin{array}{c}n_1,\dots,n_{K_j}\\n_1+\dots+n_{K_j}=n\end{array}}
\frac{n!}{n_1!\cdots n_{K_j}!}\left(\frac{\lambda_{j1}}{\lambda_{j}}z_{j1}\right)^{n_1}\cdots \left(\frac{\lambda_{jK_j}}{\lambda_{j}}z_{jK_j}\right)^{n_{K_j}}\\
&=  
\left(\sum_{k=1}^{K_j} \frac{\lambda_{jk}}{\lambda_{j}}z_{jk}\right)^n.
\end{align*}
Finally,
\begin{align*}
&V^\textit{prio}_{b_i}(z_{11}, \dots, z_{1K_1}, \dots, z_{N1}, \dots, z_{NK_N}) \\
&= \sum_{n_1=0}^\infty \dots \sum_{n_N=0}^\infty \prod_{j=1}^N\left(\sum_{k=1}^{K_j} \frac{\lambda_{jk}}{\lambda_{j}}z_{jk}\right)^{n_j}
P(X_{i1} = n_1, \dots, X_{iN} = n_N) \\
&= V_{b_i}\left(\frac{1}{\lambda_1}\sum_{k=1}^{K_1}\lambda_{1k} z_{1k}, \dots, \frac{1}{\lambda_N}\sum_{k=1}^{K_N}\lambda_{Nk} z_{Nk}\right),
\qquad\qquad i=1,\dots,N.
\end{align*}
\end{proof}

\subsection{Cycle time}

The LST of the cycle time distribution is still given by \eqref{cycletimelst} (cycle starting at a visit beginning) or \eqref{cycletimlstVc} (cycle starting at a visit completion) if we define $\lambda_i := \sum_{k=1}^{K_i}\lambda_{ik}$ and $\beta_i(\cdot) := \sum_{k=1}^{K_i}\frac{\lambda_{ik}}{\lambda_i} \beta_{ik}(\cdot)$, for $i=1,\dots,N$, because the cycle time does not depend on the order of service within each queue.

\subsection{Marginal queue lengths and waiting times}\label{gatedmarginalw}

The LST of the marginal queue length distribution of $Q_i$ at an arbitrary epoch depends on the service discipline that is used to serve $Q_i$, and so does the LST of the waiting time distribution. First we study a queue with gated service, afterwards a queue with exhaustive service.

\subsubsection*{Gated service}

 We focus on a particular queue, say $Q_i$, assuming it has gated service. We determine the LST of the waiting time distribution for a type $i$ customer with priority level $k$, from now on called a ``type $ik$ customer'', using the fact that this customer will not be served until the next cycle (starting at a visit \emph{beginning} to $Q_i$). The time from the start of the cycle until the arrival of this customer will be called ``past cycle time'', denoted by $C_{iP}$. The residual cycle time will be denoted by $C_{iR}$. The waiting time of a type $ik$ customer is composed of $C_{iR}$, the service times of all higher priority customers that arrived during $C_{iP}+C_{iR}$, and the service times of all type $ik$ customers that have arrived during $C_{iP}$. Let $L_H(T)$ be the number of higher priority customers that have arrived during time interval $T$, and equivalently define $L_k(T)$ for the type $ik$ customers. The arrival process of higher priority customers is a Poisson process with intensity $\lambda_{H} := \sum_{j=1}^{k-1} \lambda_{ij}$. This intensity equals 0 if $k=1$, which corresponds to the highest priority customers in $Q_i$. The service time distribution of these higher priority customers is denoted by $B_{H}$ (suppressing $i$ and $k$ for notational reasons), with LST $\beta_{H}(\cdot) := \sum_{j=1}^{k-1}\frac{\lambda_{ij}}{\lambda_{H}} \beta_{ij}(\cdot)$.

\begin{theorem}
The LST of the waiting time distribution of a type $ik$ customer in a polling system with $Q_i$ being served according to the gated discipline, is:
\[
E\left[\ee^{-\omega W_{ik}}\right] =\frac{\gamma_i\big(\lambda_H(1-\beta_H(\omega))+\lambda_{ik}(1-\beta_{ik}(\omega))\big) - \gamma_i\big(\omega+\lambda_H(1-\beta_H(\omega))\big)}{[\omega-\lambda_{ik}(1-\beta_{ik}(\omega))]E(C)}.
\]
\end{theorem}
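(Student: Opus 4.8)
The plan is to start from the decomposition of $W_{ik}$ already spelled out in the text and reduce everything to the joint transform of the past and residual parts of a cycle. Writing
\[
W_{ik} = C_{iR} + \sum_{m=1}^{L_H(C_{iP}+C_{iR})} B_H^{(m)} + \sum_{m=1}^{L_k(C_{iP})} B_{ik}^{(m)},
\]
where the $B_H^{(m)}$ and $B_{ik}^{(m)}$ are independent copies of $B_H$ and $B_{ik}$, I would first condition on the pair $(C_{iP},C_{iR})$. Given these interval lengths, $L_H(C_{iP}+C_{iR})$ and $L_k(C_{iP})$ are independent Poisson counts, so both inner sums are compound Poisson. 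Applying the standard identity $E[\ee^{-\omega\sum_{m=1}^{N(T)}X_m}]=\ee^{-\lambda T(1-\phi(\omega))}$ (for a Poisson($\lambda T$) number of i.i.d.\ terms with LST $\phi$) to each sum, the conditional transform factorises and, after collecting the exponents of $C_{iP}$ and $C_{iR}$, gives
\[
E[\ee^{-\omega W_{ik}} \mid C_{iP},C_{iR}] = \ee^{-a\,C_{iP}}\,\ee^{-b\,C_{iR}},
\]
with $a = \lambda_H(1-\beta_H(\omega))+\lambda_{ik}(1-\beta_{ik}(\omega))$ and $b = \omega+\lambda_H(1-\beta_H(\omega))$.

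The crux is then to evaluate $E[\ee^{-a C_{iP}-b C_{iR}}]$, the joint LST of the past and residual of the cycle (based at visit beginnings to $Q_i$, hence with LST $\gamma_i$) as seen by the tagged customer. Since this customer is a Poisson arrival, PASTA applies: the cycle containing its arrival is length-biased with distribution $c\,\dd F_i(c)/E(C)$, where $F_i$ is the cycle-length distribution, and conditional on its length $c$ the arrival epoch $C_{iP}$ is uniform on $[0,c]$ with $C_{iR}=c-C_{iP}$. Integrating,
\[
E[\ee^{-a C_{iP}-b C_{iR}}] = \frac{1}{E(C)}\int_0^\infty \dd F_i(c)\int_0^c \ee^{-au-b(c-u)}\dd u = \frac{\gamma_i(a)-\gamma_i(b)}{(b-a)E(C)},
\]
the inner integral evaluating to $(\ee^{-ac}-\ee^{-bc})/(b-a)$.

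Finally I would substitute $a$ and $b$, noting that $b-a = \omega-\lambda_{ik}(1-\beta_{ik}(\omega))$, so the denominator becomes $[\omega-\lambda_{ik}(1-\beta_{ik}(\omega))]E(C)$ while the numerator $\gamma_i(a)-\gamma_i(b)$ is precisely $\gamma_i\big(\lambda_H(1-\beta_H(\omega))+\lambda_{ik}(1-\beta_{ik}(\omega))\big)-\gamma_i\big(\omega+\lambda_H(1-\beta_H(\omega))\big)$, recovering the claimed expression. The only genuinely delicate point is the inspection-paradox step: one must justify that for a Poisson arrival the enclosing cycle is length-biased with a conditionally uniform arrival epoch, and that — given the two subinterval lengths — the higher-priority arrivals over the whole cycle and the earlier type $ik$ arrivals over $C_{iP}$ are independent Poisson (the tagged arrival not counting itself), which is what legitimises the compound-Poisson factorisation of the first step.
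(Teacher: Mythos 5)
Your proposal is correct and follows essentially the same route as the paper: the identical decomposition of $W_{ik}$ into $C_{iR}$ plus the compound-Poisson sums over $L_H(C_{iP}+C_{iR})$ and $L_k(C_{iP})$, conditioning on $(C_{iP},C_{iR})$, and reduction to the joint transform $E[\ee^{-aC_{iP}-bC_{iR}}]$ with the same exponents $a$ and $b$. The only difference is that where the paper simply cites the identity $E[\ee^{-\omega_P C_{iP}-\omega_R C_{iR}}]=\big(E[\ee^{-\omega_P C_i}]-E[\ee^{-\omega_R C_i}]\big)/\big((\omega_R-\omega_P)E(C)\big)$ from Boxma, Levy and Yechiali, you derive it yourself via the length-biased-cycle and conditionally uniform arrival-epoch argument — a correct, self-contained verification of the cited step rather than a different method.
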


\begin{proof}
\begin{align}
E\left[\ee^{-\omega W_{ik}}\right] =& E\left[\ee^{-\omega (C_{iR}+\sum_{j=1}^{L_H(C_{iP}+C_{iR})}B_{H,j}+\sum_{j=1}^{L_k(C_{iP})}B_{ik,j})}\right] \nonumber\\
=& \int_{t=0}^\infty \int_{u=0}^\infty \sum_{m=0}^\infty\sum_{n=0}^\infty E\left[\ee^{-\omega (u+\sum_{j=1}^{m}B_{H,j}+\sum_{j=1}^{n}B_{ik,j})}\right] \nonumber\\
&\cdot P(L_H(C_{iP}+C_{iR})=m, L_k(C_{iP})=n) \dd P(C_{iP}<t, C_{iR}<u)\nonumber\\
=& \int_{t=0}^\infty \int_{u=0}^\infty \ee^{-\omega u} \sum_{m=0}^\infty\sum_{n=0}^\infty E\left[\ee^{-\omega\sum_{j=1}^{m}B_{H,j}}\right]E\left[\ee^{-\omega\sum_{j=1}^{n}B_{ik,j}}\right]
\nonumber\\
&\cdot\frac{(\lambda_H(t+u))^m}{m!}\ee^{-\lambda_H(t+u)}\frac{(\lambda_{ik} t)^n}{n!}\ee^{-\lambda_{ik}t}
\dd P(C_{iP}<t, C_{iR}<u)\nonumber\\
\nonumber\\
=& \int_{t=0}^\infty \int_{u=0}^\infty \ee^{-t\big(\lambda_H(1-\beta_H(\omega))+\lambda_{ik}(1-\beta_{ik}(\omega))\big)}
\ee^{-u\big(\omega + \lambda_H(1-\beta_H(\omega))\big)} \dd P(C_{iP}<t, C_{iR}<u)\nonumber\\
=&\frac{\gamma_i\big(\lambda_H(1-\beta_H(\omega))+\lambda_{ik}(1-\beta_{ik}(\omega))\big) - \gamma_i\big(\omega+\lambda_H(1-\beta_H(\omega))\big)}{[\omega-\lambda_{ik}(1-\beta_{ik}(\omega))]E(C)}.
\label{lstwikgated}
\end{align}
For the last step in the derivation of \eqref{lstwikgated} we used
\[E[\ee^{-\omega_P C_{iP}-\omega_R C_{iR}}] = \frac{E[\ee^{-\omega_P C_i}]-E[\ee^{-\omega_R C_i}]}{(\omega_R-\omega_P)E(C)},\]
see, e.g., \cite{boxmalevyyechiali92}. The cycle time $C_i$ starts with a visit \emph{beginning} to $Q_i$, and its LST $\gamma_i(\cdot)$ is given by \eqref{cycletimelst}.
\end{proof}

We now present an alternative derivation of \eqref{lstwikgated} by modelling the system in such a way that the Fuhrmann-Cooper decomposition \cite{fuhrmanncooper85} can be used. Fuhrmann and Cooper \cite{fuhrmanncooper85} showed that the waiting time of a customer in an $M/G/1$ queue with server vacations is the sum of two independent quantities: the waiting time of a customer in a corresponding $M/G/1$ queue without vacations, and the residual vacation time. Conditions that have to be satisfied in order to use this composition, are:
\begin{enumerate}
\item Poisson arrivals, service times are independent of each other, of the arrival process and of the sequence of vacation periods that precede a certain service time;
\item all customers are eventually served;
\item customers are served in an order that is independent of their service times;
\item service is nonpreemptive;
\item the rules that govern when the server begins and ends vacations do not anticipate future jumps of the Poisson arrival process.
\end{enumerate}
In order to use the Fuhrmann-Cooper decomposition to determine $E\left[\ee^{-\omega W_{ik}}\right]$, we first model the system as a polling system with $N+2$ queues $(Q_1, \dots, Q_{i-1}, Q_H, Q_{ik}, Q_L, Q_{i+1}, \dots, Q_N)$ and no switch-over times between $Q_H, Q_{ik}$, and $Q_L$. $Q_H$ contains all type $i$ customers with priority level higher than $k$ (i.e. priority \emph{index} less than $k$), $Q_{ik}$ contains all type $i$ customers of priority level $k$, and $Q_L$ contains all type $i$ customers of priority level lower than $k$. Lower priority customers arrive according to a Poisson process with intensity $\lambda_{L} := \sum_{j=k+1}^{K_i} \lambda_{ij}$ (again suppressing index $i$ for notational reasons). The service time distribution of these customers is denoted by $B_{L}$, with LST $\beta_{L}(\cdot) := \sum_{j=k+1}^{K_i}\frac{\lambda_{ij}}{\lambda_{L}} \beta_{ij}(\cdot)$.
The service discipline of this equivalent system is synchronised gated, which is a more general version of gated. The gates for queues $Q_H, Q_{ik}$, and $Q_L$ are set simultaneously when the server arrives at $Q_H$, but the gates for the other queues are still set at the server's arrival at these queues. From the viewpoint of a type $ik$ customer, and as far as waiting times are concerned, this system is an $M/G/1$ queue with generalised vacations (because service is not exhaustive and the length of a vacation is positively correlated with the length of a visit period). The vacation is the intervisit period of $Q_{ik}$, which starts with a service to $Q_L$ and ends when service of $Q_H$ is completed. Fuhrmann and Cooper \cite{fuhrmanncooper85} state that the number of type $ik$ customers in the system at an arbitrary epoch ($L_{ik}$) is the sum of the queue length in an isolated $M/G/1$ queue with only type $ik$ customers ($L_{ik|M/G/1}$), and the number of type $ik$ customers at a random epoch during a vacation ($L_{ik|I}$):
\begin{equation}
E\left[z^{L_{ik}}\right] = \frac{(1-\rho_{ik})(1-z)\beta_{ik}(\lambda_{ik}(1-z))}{\beta_{ik}(\lambda_{ik}(1-z))-z} \cdot \frac{\widetilde{V}_{c_{ik}}(z) - \widetilde{V}_{b_{ik}}(z)}{(1-z)(E(L_{{ik}|I_{\textit{end}}}) - E(L_{ik|I_{\textit{begin}}}))}.\label{gfzikgateddecomposition}
\end{equation}
Equation \eqref{queuelengthintervisit} has been used to derive the second term at the right-hand side of \eqref{gfzikgateddecomposition}, where $\widetilde{V}_{b_{ik}}(z)$ denotes the GF of the distribution of the number of type $ik$ customers at the beginning of a visit to $Q_{ik}$, and $\widetilde{V}_{c_{ik}}(z)$ denotes the GF at the completion of a visit to $Q_{ik}$.
The number of type $ik$ customers at the beginning of a visit to $Q_{ik}$ is the number of type $ik$ customers that arrived during the previous cycle (starting with a visit to $Q_i$, which corresponds to a visit beginning to $Q_H$ in this model), plus the number of type $ik$ customers that arrived during the service of the type $H$ customers that arrived during the previous cycle. However, because of the synchronised gated service discipline only those type $ik$ customers present at the visit beginning of $Q_{H}$ will be served:
\begin{align*}
\widetilde{V}_{b_{ik}}(z) &= \gamma_i\big(\lambda_H(1-\beta_H(\lambda_{ik}(1-z))) + \lambda_{ik}(1-z)\big), \\
\widetilde{V}_{c_{ik}}(z) &= \gamma_i\big(\lambda_H(1-\beta_H(\lambda_{ik}(1-z))) + \lambda_{ik}(1-\beta_{ik}(\lambda_{ik}(1-z)))\big).
\end{align*}
Furthermore, we have $E(L_{ik|I_{\textit{end}}}) - E(L_{ik|I_{\textit{begin}}}) = \lambda_{ik} E(I_{ik}) = \lambda_{ik}(1-\rho_{ik}) E(C)$. Now, to derive the LST of the waiting time $W_{ik}$, we can apply the distributional form of Little's law \cite{keilsonservi90} to \eqref{gfzikgateddecomposition}, because the required conditions are fulfilled for each customer class (including customer types $H$, $ik$, and $L$): the customers enter the system in a Poisson stream, every customer enters the system and leaves the system one at a time in order of arrival, and for any time $t$ the entry process into the system of customers after time $t$ and the time spent in the system by any customer arriving before time $t$ are independent. This leads to the following expression:
\begin{align}
E\left[\ee^{-\omega W_{ik}}\right] =\,& \frac{(1-\rho_{ik})\omega}{\omega-\lambda_{ik}(1-\beta_{ik}(\omega))} \nonumber\\
&\cdot
\frac{\gamma_i\big(\lambda_H(1-\beta_H(\omega))+\lambda_{ik}(1-\beta_{ik}(\omega))\big) - \gamma_i\big(\omega+\lambda_H(1-\beta_H(\omega))\big)}{(1-\rho_{ik})\omega E(C)}.\label{lstwikgateddecomposition}
\end{align}
We recognise the first term on the right-hand side of \eqref{lstwikgateddecomposition} as the LST of the waiting time distribution of an $M/G/1$ queue with only type $ik$ customers. Equation \eqref{lstwikgateddecomposition} can be rewritten to \eqref{lstwikgated}.

\subsubsection*{Exhaustive service}

Analysis of the model with exhaustive service requires a different approach. The key observation, made by Fuhrmann and Cooper \cite{fuhrmanncooper85}, is that the polling system from the viewpoint of a type $i$ customer is an $M/G/1$ queue with multiple server vacations. The $M/G/1$ queue with priorities and vacations has been extensively analysed by Kella and Yechiali \cite{kellayechiali88}. We use their approach to find the waiting time LST for type $ik$ customers. Kella and Yechiali \cite{kellayechiali88} distinguish between systems with single and multiple vacations, and preemptive resume and nonpreemptive service. The polling system that we consider corresponds to the system with multiple vacations, because the server is never idling, but keeps on switching if the system is empty. In the present paper we do not focus on preemptive resume, but we will give some results on it later in this subsection. First we focus on the case labelled as NPMV (nonpreemptive, multiple vacations) in \cite{kellayechiali88}. We consider the system from the viewpoint of a type $ik$ customer to derive $E[\ee^{-\omega W_{ik}}]$. For the waiting time of a type $ik$ customer, the order in which the higher priority customers are served is irrelevant. We will refer to all type $i$ customers with higher priority than $k$ as ``type $H$ customers'' arriving according to one Poisson stream with intensity $\lambda_{H} := \sum_{j=1}^{k-1} \lambda_{ij}$. The service time distribution of these customers is denoted by $B_{H}$, with LST $\beta_{H}(\cdot) := \sum_{j=1}^{k-1}\frac{\lambda_{ij}}{\lambda_{H}} \beta_{ij}(\cdot)$. A busy period of type $H$ customers is denoted by $\textit{BP}_H$ with LST $\pi_H(\cdot)$ which is the root of the equation $\pi_H(\omega) = \beta_H(\omega+\lambda_H(1-\pi_H(\omega)))$. In a similar way we define ``type $L$'' customers which arrive according to a Poisson stream with intensity $\lambda_{L} := \sum_{j=k+1}^{K_i} \lambda_{ij}$. The service time of these customers is denoted by $B_{L}$, with LST $\beta_{L}(\cdot) := \sum_{j=k+1}^{K_i}\frac{\lambda_{ij}}{\lambda_{L}} \beta_{ij}(\cdot)$.

From the viewpoint of a type $ik$ customer and as far as waiting times are considered, a polling system is a \emph{nonpriority} single server system with multiple vacations. The vacation can either be the intervisit period $I_i$, or a busy period of type $H$ customers, or the service of a type $L$ customer. The LSTs of these three types of vacations are:
\begin{align}
E[\ee^{-\omega I_i}] &= \widetilde{V}_{b_i}(1-\omega/\lambda_i), \nonumber\\
E[\ee^{-\omega \textit{BP}_H}] &= \pi_H(\omega),\label{intervisitexhaustive}\\
E[\ee^{-\omega B_L}] &= \beta_L(\omega). \nonumber
\end{align}
The first equality in Equation \eqref{intervisitexhaustive} follows immediately from the fact that the number of type $i$ customers at the beginning of a visit to $Q_i$ is the number of type $i$ customers that have arrived during the previous intervisit period: $\widetilde{V}_{b_i}(z) = E[\ee^{-(\lambda_i(1-z)) I_i}]$.

The key observation is that an arrival of a type $ik$ customer will always take place within either an $I_{H,ik}$ cycle, or an $L_{H,ik}$ cycle. An $I_{H,ik}$ cycle is a cycle that starts with an intervisit period for $Q_i$, followed by the service of all customers with priority level $H$ or $k$, and ends at the moment that no type $H$ or $ik$ customers are left in the system. Notice that at the start of the intervisit period, no type $H$ or $ik$ customers were present in the system either. An $L_{H,ik}$ cycle is a similar cycle, but starts with the service of a type $L$ customer. This cycle also ends at the moment that no type $H$ or $ik$ customers are left in the system.

The fraction of time that the system is in an $L_{H,ik}$ cycle is $\frac{\rho_L}{1-\rho_H-\rho_{ik}}$, because type $L$ customers arrive with intensity $\lambda_L$. Each of these customers will start an $L_{H,ik}$ cycle and the length of an $L_{H,ik}$ cycle equals $\frac{E(B_L)}{1-\rho_H-\rho_{ik}}$:
\begin{align*}
E(L_{H,ik}\textrm{ cycle}) &= E(B_L) + \lambda_{H,ik} E(B_L) E(\textit{BP}_{H,ik}) \\
&= E(B_L) + \lambda_{H,ik} E(B_L) \frac{E(B_{H,ik})}{1-\rho_{H,ik}} \\
&= \left(1+\frac{\rho_H+\rho_{ik}}{1-\rho_H-\rho_{ik}}\right)E(B_L) = \frac{E(B_L)}{1-\rho_H-\rho_{ik}},
\end{align*}
where $\lambda_{H,ik}, B_{H,ik}, \textit{BP}_{H,ik}$, and $\rho_{H,ik}$ denote the intensity, service time, busy period and occupation rate corresponding to type $i$ customers with priority level $1,\dots, k$.

The fraction of time that the system is in an $I_{H,ik}$ cycle, is $1-\frac{\rho_L}{1-\rho_H-\rho_{ik}} = \frac{1-\rho_i}{1-\rho_H-\rho_{ik}}$. This result can also be obtained by using the argument that the fraction of time that the system is in an intervisit period is the fraction of time that the server is not serving $Q_i$, which is equal to $1-\rho_i$. A cycle which starts with such an intervisit period and stops after the service of all customers with priority levels $1,\dots,k$ that arrived during the intervisit period and their descendants of priority levels $1,\dots,k$, has mean length $E(I_i) + \lambda_{H,ik} E(I_i) E(\textit{BP}_{H,ik}) = \frac{E(I_i)}{1-\rho_H-\rho_{ik}}$. This also leads to the conclusion that  $\frac{1-\rho_i}{1-\rho_H-\rho_{ik}}$ is the fraction of time that the system is in an $I_{H,ik}$ cycle.

A type $ik$ customer arriving during an $I_{H,ik}$ cycle views the system as a nonpriority $M/G/1$ queue with multiple server vacations. The service requirement in this system is \emph{not} the service requirement of a type $ik$ customer, $B_{ik}$, but the time that is required to serve a type $ik$ customer and all higher priority customers that arrive during this service until no higher priority customers are present in the system. We will elaborate on this so-called \emph{completion time} later. The vacation is the intervisit time $I_i$, plus the service times of all type $H$ customers that have arrived during that intervisit time and their type $H$ descendants. We denote this extended intervisit time by $I_i^*$ with LST
\[\widetilde{I}_i^*(\omega) = \widetilde{I}_i(\omega+\lambda_H(1-\pi_H(\omega))).\]
The mean of $I_i^*$ equals $E(I_i^*) = \frac{E(I_i)}{1-\rho_H}$.

A type $ik$ customer arriving during an $L_{H,ik}$ cycle views the system as a nonpriority $M/G/1$ queue with multiple server vacations $B_L$ plus the busy periods generated by all type $H$ customers that have arrived during the service time of the type $L$ customer. We will denote this extended service time by $B_L^*$ with mean $E(B_L^*)= \frac{E(B_L)}{1-\rho_H}$, and LST
\[\beta_L^*(\omega) = \beta_L(\omega+\lambda_H(1-\pi_H(\omega))).\]
We also have to take into account that a busy period of type $ik$ customers might be interrupted by the arrival of type $H$ customers. Therefore, in the alternative system that we are considering, the service time of a type $ik$ customer is equal to $B_{ik}$ plus the service times of all type $H$ customers that arrive during this service time, and all of their type $H$ descendants. The LST of the distribution of this extended service time $B_{ik}^*$ is
\[\beta_{ik}^*(\omega) = \beta_{ik}(\omega + \lambda_H(1-\pi_H(\omega))).\]
This extended service time is often called completion time in the literature, cf. \cite{takagi90}. In this alternative system, the mean service time of these customers equals $E(B_{ik}^*) = \frac{E(B_{ik})}{1-\rho_H}$. The fraction of time that the system is serving these customers is $\rho_{ik}^* = \frac{\rho_{ik}}{1-\rho_H} = 1 - \frac{1-\rho_H-\rho_{ik}}{1-\rho_H}$.

\begin{theorem}
The LST of the waiting time distribution of a type $ik$ customer in polling system with $Q_i$ being served exhaustively, is:
\begin{align}
E[\ee^{-\omega W_{ik}}] &= \frac{(1-\rho_{ik}^*)\omega}{\omega-\lambda_{ik}(1-\beta_{ik}^*(\omega))} \cdot
\left[\frac{1-\rho_i}{1-\rho_H-\rho_{ik}} \cdot \frac{1-\widetilde{I}_i^*(\omega)}{\omega E(I_i^*)} + \frac{\rho_L}{1-\rho_H-\rho_{ik}}\cdot\frac{1-\beta_L^*(\omega)}{\omega E(B_L^*)}\right].\label{lstwikexhaustive}
\end{align}
\end{theorem}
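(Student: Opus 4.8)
The plan is to do for a tagged type $ik$ customer exactly what led to the nonpriority exhaustive formula \eqref{lstwexhaustive}: reduce the polling system it experiences to a \emph{nonpriority} $M/G/1$ queue with multiple vacations and invoke the Fuhrmann--Cooper decomposition \cite{fuhrmanncooper85}, following the NPMV analysis of Kella and Yechiali \cite{kellayechiali88}. Relative to the nonpriority case there are two new features to absorb. First, the higher-priority type $H$ work must be folded into the service and vacation times through the completion-time map $\omega\mapsto\omega+\lambda_H(1-\pi_H(\omega))$, which is precisely what turns $\beta_{ik}$, $\beta_L$ and $\widetilde{I}_i$ into $\beta_{ik}^*$, $\beta_L^*$ and $\widetilde{I}_i^*$. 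Second, the vacation of the tagged customer is now of two kinds, an extended intervisit $I_i^*$ or an extended type $L$ service $B_L^*$, according to whether the customer falls in an $I_{H,ik}$- or an $L_{H,ik}$-cycle.

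First I would argue that, as far as the waiting time of the type $ik$ customer is concerned, the server is always doing one of three things: serving type $ik$ work, taking vacation $I_i^*$, or taking vacation $B_L^*$. Because type $H$ customers have strictly higher priority and service is nonpreemptive, every type $H$ busy period can be merged into whichever of these three activities was in progress when the type $H$ arrival that triggered it occurred; this is the completion-time argument (cf. \cite{takagi90,kellayechiali88}) and it replaces the bare type $ik$ service $B_{ik}$ by the completion time $B_{ik}^*$, with $\rho_{ik}^*=\lambda_{ik}E(B_{ik}^*)=\rho_{ik}/(1-\rho_H)$. What remains is a genuine nonpriority $M/G/1$ queue with multiple vacations: the type $ik$ customers form a Poisson stream independent of the $B_{ik}^*$'s and of the vacation lengths (type $H$ and type $ik$ arrivals being independent), they are served in order of arrival, and the rule that starts a vacation never anticipates future type $ik$ arrivals, so the five conditions for the Fuhrmann--Cooper decomposition listed above are met.

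Granting the decomposition, the two factors drop out. The first is the Pollaczek--Khinchine waiting-time LST of an $M/G/1$ queue with arrival rate $\lambda_{ik}$ and service time $B_{ik}^*$, namely $\tfrac{(1-\rho_{ik}^*)\omega}{\omega-\lambda_{ik}(1-\beta_{ik}^*(\omega))}$, which is the prefactor in \eqref{lstwikexhaustive}. The second is the residual-vacation LST, which I would identify as a length-biased mixture. By PASTA the tagged customer arrives during a vacation with probability $1-\rho_{ik}^*=\tfrac{1-\rho_H-\rho_{ik}}{1-\rho_H}$; conditionally on this, the vacation is $I_i^*$ with probability $\tfrac{(1-\rho_i)/(1-\rho_H)}{1-\rho_{ik}^*}=\tfrac{1-\rho_i}{1-\rho_H-\rho_{ik}}$ and $B_L^*$ with the complementary probability $\tfrac{\rho_L}{1-\rho_H-\rho_{ik}}$. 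These are exactly the time-stationary $I_{H,ik}$- and $L_{H,ik}$-cycle fractions already computed, the coincidence being that the vacation fills the same fraction $\tfrac{1-\rho_H-\rho_{ik}}{1-\rho_H}$ of every cycle irrespective of its type, so this common factor cancels. Within each type the residual is the ordinary forward-recurrence-time LST, $\tfrac{1-\widetilde{I}_i^*(\omega)}{\omega E(I_i^*)}$ respectively $\tfrac{1-\beta_L^*(\omega)}{\omega E(B_L^*)}$, and weighting these by the two fractions produces the bracketed term of \eqref{lstwikexhaustive}. Multiplying the two factors gives the claim.

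The hard part will be making the completion-time reduction rigorous rather than merely matching means. One has to verify that, after absorbing the type $H$ busy periods, the effective vacation process is genuinely independent of the type $ik$ arrival stream and that it reproduces the \emph{joint} law of the residual work present at the tagged arrival and of the work the customer must wait for, so that the single-vacation Fuhrmann--Cooper result carries over verbatim to a vacation that is a two-type mixture. In the nonpreemptive setting this is the delicate step, because a type $ik$ arrival during an ongoing service must still wait for that service to finish, and one must check that this residual is correctly accounted for by the residual-vacation and residual-$B_{ik}^*$ terms of the decomposition and not double-counted. Once that independence-and-equivalence is established, the identification of the Pollaczek--Khinchine factor, of the two residual-vacation LSTs, and of the mixture weights $\tfrac{1-\rho_i}{1-\rho_H-\rho_{ik}}$ and $\tfrac{\rho_L}{1-\rho_H-\rho_{ik}}$ is the routine bookkeeping sketched above.
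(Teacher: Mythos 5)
Your proposal follows essentially the same route as the paper: the completion-time reduction via $\omega\mapsto\omega+\lambda_H(1-\pi_H(\omega))$ (the Kella--Yechiali NPMV viewpoint), the identification of the two vacation types $I_i^*$ and $B_L^*$ weighted by the $I_{H,ik}$- and $L_{H,ik}$-cycle time fractions $\frac{1-\rho_i}{1-\rho_H-\rho_{ik}}$ and $\frac{\rho_L}{1-\rho_H-\rho_{ik}}$, and a final application of the Fuhrmann--Cooper decomposition, which is exactly how the paper proves \eqref{lstwikexhaustive}. Your additional care with the length-biased mixture (noting that the vacation occupies the same fraction $\frac{1-\rho_H-\rho_{ik}}{1-\rho_H}$ of both cycle types, so the direct convex combination of residual LSTs is legitimate) makes explicit a point the paper leaves implicit, but it is a refinement of, not a departure from, the paper's argument.
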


\begin{proof}
The system has been modelled as an $M/G/1$ queue with customers that have service requirement $B_{ik}^*$, and multiple server vacations that have length $I^*_i$ with probability $\frac{1-\rho_i}{1-\rho_H-\rho_{ik}}$, and length $B_L^*$ with probability $\frac{\rho_L}{1-\rho_H-\rho_{ik}}$. Equation \eqref{lstwikexhaustive} follows immediately from application of the Fuhrmann-Cooper decomposition to this system.
\end{proof}
\begin{remark}
Equation \eqref{lstwikexhaustive} can be rewritten in several forms, which lead to different interpretations. A compact form is
\begin{equation}
E[\ee^{-\omega W_{ik}}]= \frac{\frac{1-\rho_i}{E(I_i)}[1-\widetilde{I}_i(\omega+\lambda_H-\lambda_H\pi_H(\omega))]+\lambda_L[1-\beta_L(\omega+\lambda_H-\lambda_H\pi_H(\omega))]}
{\lambda_{ik}\beta_{ik}(\omega+\lambda_H-\lambda_H\pi_H(\omega))-\lambda_{ik}+\omega},\label{lstwikexhaustivecompact}
\end{equation}
which is the form used by Kella and Yechiali (see \cite{kellayechiali88}, Section 4.1).

Another form, which is more suitable for interpretation, is
\begin{equation}
\begin{aligned}
E[\ee^{-\omega W_{ik}}] &=
\frac{(1-\rho_{ik}^*)\omega}{\omega-\lambda_{ik}(1-\beta_{ik}^*(\omega))} \\
&\cdot \left[\frac{1-\rho_i}{1-\rho_H-\rho_{ik}} \cdot\frac{1-\widetilde{I}_i(\omega+\lambda_H(1-\pi_H(\omega)))}{(\omega+\lambda_H(1-\pi_H(\omega))) E(I_i)} \right. \\
&\qquad+\left.\frac{\rho_L}{1-\rho_H-\rho_{ik}}\cdot\frac{1-\beta_L(\omega+\lambda_H(1-\pi_H(\omega)))}{(\omega+\lambda_H(1-\pi_H(\omega)))E(B_L)}\right]\\
&\cdot \left[(1-\rho_H) + \rho_H \cdot \frac{1-\pi_H(\omega)}{\omega E(\textit{BP}_H)}\right].
\end{aligned}
\label{lstwikexhaustivedecomp}
\end{equation}

The first term on the right hand side of \eqref{lstwikexhaustivedecomp} is the LST of the waiting time distribution of a customer in a nonpriority $M/G/1$ queue with customers arriving with intensity $\lambda_{ik}$ and having service requirement LST $\beta_{ik}^*(\cdot)$, i.e. the completion time of a type $ik$ customer. The second term indicates that with probability $\frac{1-\rho_i}{1-\rho_H-\rho_{ik}}$ a type $ik$ customer has to wait a residual intervisit time, plus the busy periods of all higher priority customers that arrive during this residual intervisit time. With probability $\frac{\rho_L}{1-\rho_H-\rho_{ik}}$ the customer has to wait a residual service time of a type $L$ customer, plus the busy periods of all higher priority customers that arrive during this residual intervisit time. The last term in \eqref{lstwikexhaustivedecomp} indicates that with probability $\rho_H$ a customer also has to wait for the residual length of a busy period of higher priority customers.
\end{remark}

\begin{remark}
Substitution of \eqref{intervisitC} in \eqref{lstwikexhaustivecompact} leads to a different expression for $E[\ee^{-\omega W_{ik}}]$:
\begin{align}
&E[\ee^{-\omega W_{ik}}]\nonumber\\
&= \frac{1-\gamma_i^*\big(\omega
-\lambda_{ik}(1-\beta_{ik}(\omega+\lambda_H(1-\pi_H(\omega))))
-\lambda_{L}(1-\beta_{L}(\omega+\lambda_H(1-\pi_H(\omega))))
\big)}
{[\omega-\lambda_{ik}(1-\beta_{ik}(\omega+\lambda_H(1-\pi_H(\omega))))]E(C)}\nonumber
\\
&\quad+\frac{\lambda_L\big[1-\beta_L(\omega+\lambda_H(1-\pi_H(\omega)))\big]}
{\omega-\lambda_{ik}(1-\beta_{ik}(\omega+\lambda_H(1-\pi_H(\omega))))}.\label{lstwikexhaustiveC}
\end{align}
For the lowest priority customers this expression simplifies to
\begin{align*}
E[\ee^{-\omega W_{iK_i}}] &=\frac{1-\gamma^*_i\big(\omega - \lambda_{iK_i}(1-\beta_{iK_i}(\omega+\lambda_H(1-\pi_H(\omega))))\big)}{\big(\omega-\lambda_{iK_i}(1-\beta_{iK_i}(\omega+\lambda_H(1-\pi_H(\omega))))\big)E(C)}\\
&=E[\ee^{-(\omega - \lambda_{iK_i}(1-\beta_{iK_i}(\omega+\lambda_H(1-\pi_H(\omega)))))C^*_{i,\textit{res}}}],
\end{align*}
where type $H$ customers in this case are all type $i$ customers except for those with the lowest priority.
\end{remark}

\begin{remark}
In the derivation of Equation \eqref{lstwikexhaustive} we followed the approach of Kella and Yechiali \cite{kellayechiali88}, but restricted ourselves to the nonpreemptive situation only. 
Using the decomposition \eqref{lstwikexhaustivedecomp}, it only requires a small extra step to obtain the waiting time LST in a system with \emph{preemptive resume}. The term $\frac{1-\beta_L(\omega+\lambda_H(1-\pi_H(\omega)))}{(\omega+\lambda_H(1-\pi_H(\omega)))E(B_L)}$ on the third line is the LST of a residual service time of a type $L$ customer, plus the busy periods of all type $H$ customers that arrive during this residual type $L$ service time. If type $ik$ customers are allowed to preempt the service of a type $L$ customer, this term simply vanishes from the waiting time LST. This leads to the following expression for the waiting time LST of a type $ik$ customer if the service policy is \emph{preemptive resume}:
\begin{equation}
\begin{aligned}
E[\ee^{-\omega W_{ik}}] &=
\frac{(1-\rho_{ik}^*)\omega}{\omega-\lambda_{ik}(1-\beta_{ik}^*(\omega))} \cdot \left[(1-\rho_H) + \rho_H \cdot \frac{1-\pi_H(\omega)}{\omega E(\textit{BP}_H)}\right]\\
&\cdot \left[\frac{1-\rho_i}{1-\rho_H-\rho_{ik}} \cdot\frac{1-\widetilde{I}_i(\omega+\lambda_H(1-\pi_H(\omega)))}{(\omega+\lambda_H(1-\pi_H(\omega))) E(I_i)} +\frac{\rho_L}{1-\rho_H-\rho_{ik}}\right]. \end{aligned}
\label{lstwikexhaustivepreemptiveresume}
\end{equation}
Note that the \emph{sojourn time} of a type $ik$ customer equals the waiting time plus a \emph{completion time} in this situation. Equation \eqref{lstwikexhaustivepreemptiveresume} is in agreement with the result in \cite{kellayechiali88}.

\end{remark}

We will refrain from mentioning the GFs of the marginal queue length distributions here, because they can be obtained by applying the distributional form of Little's law as we have done before.

\subsection{Moments}\label{momentspriorities}

As mentioned in Section \ref{momentsgeneral}, we do not focus on moments in this paper, and we only mention the mean waiting times of type $ik$ customers. The formulas in this subsection can also be obtained using MVA, as shown in \cite{wierman07}, but we have obtained them by differentiating \eqref{lstwikgated}, \eqref{lstwikexhaustive}, and \eqref{lstwikexhaustivepreemptiveresume} respectively:
\begin{align*}
&Q_i \textrm{ gated: } & E(W_{ik}) &= (1+2\rho_H+\rho_{ik})E(C_{i,\textit{res}})\\
&&&= (1+2\sum_{j=1}^{k-1}\rho_{ij}+\rho_{ik})E(C_{i,\textit{res}}),\label{ewikgated}\\
&\parbox{4cm}{$Q_i$ exhaustive \\(nonpreemptive): } & E(W_{ik}) &=
\frac{\sum_{j=1}^{K_i} \rho_{ij} E(B_{ij,\textit{res}})+(1-\rho_i)E(I_{i,\textit{res}})}{(1-\sum_{j=1}^{k-1}\rho_{ij})(1-\sum_{j=1}^{k}\rho_{ij})},
\\
&\parbox{4cm}{$Q_i$ exhaustive \\(preemptive resume): } & E(W_{ik}) &= \frac{\sum_{j=1}^{k} \rho_{ij} E(B_{ij,\textit{res}})+(1-\rho_i)E(I_{i,\textit{res}})}{(1-\sum_{j=1}^{k-1}\rho_{ij})(1-\sum_{j=1}^{k}\rho_{ij})}.
\end{align*}
Differentiation of \eqref{lstwikexhaustiveC} leads an to alternative expression for the mean waiting time of a type $ik$ customer if $Q_i$ is served exhaustively:
\begin{align*}
E(W_{ik}) &= \frac{(1-\rho_i)^2}{(1-\rho_H)(1-\rho_H-\rho_{ik})}\frac{E({C^*_i}^2)}{2E(C)}\\
&=\frac{(1-\rho_i)^2}{(1-\sum_{j=1}^{k-1}\rho_{ij})(1-\sum_{j=1}^{k}\rho_{ij})}\frac{E({C^*_i}^2)}{2E(C)}.
\end{align*}
Notice that the cycle time in this exhaustive case starts at a visit \emph{completion} to $Q_i$.

\subsection{A pseudo-conservation law}\label{pseudoconservationlawsection}

Let $V$ be the amount of work in the polling system at an arbitrary epoch. Boxma and Groenendijk \cite{boxmagroenendijk87} show that $V$ can be written as the sum of two independent random variables:
\begin{equation}
V =^{\hspace{-0.5em}\mbox{}^d} V_{M/G/1}+V_I,\label{pseudoconservationlawnonpriority}
\end{equation}
with $V_{M/G/1}$ the amount of work at an arbitrary epoch in the corresponding $M/G/1$ queue, and $V_I$ the amount of work in the system at an arbitrary epoch in an intervisit period when the server is idling. It is shown in \cite{boxmagroenendijk87} that relation \eqref{pseudoconservationlawnonpriority} leads to a \emph{pseudo-conservation law} for nonpriority polling systems which is not specified in more detail here. This law has been generalised for polling systems with multiple priority levels in each queue in \cite{shimogawatakahashi88} and \cite{fournierrosberg91}:
\begin{equation}
\begin{aligned}
\sum_{i=1}^N\sum_{k=1}^{K_i} \rho_{ik} E(W_{ik}) &=
\frac{\rho}{1-\rho} \sum_{i=1}^{N}\sum_{k=1}^{K_i} \rho_{ik}\frac{E(B_{ik}^2)}{2E(B_{ik})}\\
& + \rho \frac{E(S^2)}{2E(S)} +\left[\rho^2-\sum_{i=1}^N \rho_i^2\right]\frac{E(S)}{2(1-\rho)} + \sum_{i=1}^N E(Z_{ii}),
\end{aligned}
\label{pseudoconservationlawpriorities}
\end{equation}
where $S = \sum_{i=1}^N S_i$ and $Z_{ii}$ is the amount of work left behind by the server at $Q_i$ at the completion of a visit. Equation \eqref{pseudoconservationlawpriorities} is called a pseudo-conservation law because it indicates that the weighted sum of the mean waiting times $\sum_{i=1}^N\sum_{k=1}^{K_i} \rho_{ik} E(W_{ik})$ is constant, except for the term $Z_{ii}$ which depends on the service discipline. For gated service, $E(Z_{ii}) = \rho_i^2 E(C)$. For exhaustive service, $E(Z_{ii}) = 0$ because $Q_i$ is empty at the completion of a visit to this queue.
For globally gated service, which is discussed in the next section, $E(Z_{ii}) = \rho_i\left(E(C)\sum_{j=1}^i \rho_j +\sum_{j=1}^{i-1} E(S_j)\right)$.

\section{Globally gated service}\label{globallygated}

In this section we discuss a polling model with $N$ queues $(Q_1, \dots, Q_N)$ and $K_i$ priority classes in $Q_i$ with globally gated service. For this service discipline, only customers that were present when the server started its visit to $Q_1$ are served. This feature makes the model exactly the same as a nonpriority polling model with $\sum_{i=1}^N K_i$ queues $(Q_{11}, \dots, Q_{1K_1}, \dots, Q_{N1}, \dots, Q_{NK_N})$.

It was observed in Section \ref{general} that, although this system does not satisfy Property \ref{resingproperty}, it does satisfy Property \ref{borstproperty} which implies that we can still follow the same approach as in the previous sections.

\subsection{Joint queue length distribution at polling epochs}

We define the beginning of a visit to $Q_1$ as the start of a cycle, since this is the moment that determines which customers will be served during the next visits to the queues. Arriving customers will always be served in the next cycle, so the offspring GFs are:
\begin{align*}
&f^{(ik)}(z_{11}, \dots, z_{1K_1}, \dots, z_{N1}, \dots, z_{NK_N}) \\
&\,= h_{ik}(z_{11}, \dots, z_{1K_1}, \dots, z_{N1}, \dots, z_{NK_N}) \\
&\,= \beta_{ik}(\sum_{j=1}^N\sum_{l=1}^{K_j}\lambda_{jl}(1-z_{jl})),\qquad i=1,\dots,N; k=1,\dots,K_i.
\end{align*}
The $N$ immigration functions are:
\[
g^{(i)}(z_{11}, \dots, z_{1K_1}, \dots, z_{N1}, \dots, z_{NK_N}) = \sigma_i(\sum_{j=1}^N\sum_{l=1}^{K_j}\lambda_{jl}(1-z_{jl})),\qquad\qquad i=1,\dots,N.
\]
Using these definitions, the formula for the GF of the joint queue length distribution at the beginning of a cycle is similar to the one found in Section \ref{general}:
\begin{equation}
P_1(z_{11}, \dots, z_{1K_1}, \dots, z_{N1}, \dots, z_{NK_N}) = \prod_{n=0}^\infty g(f_n(z_{11}, \dots, z_{1K_1}, \dots, z_{N1}, \dots, z_{NK_N})).
\end{equation}

Notice that in a system with globally gated service it is possible to express the joint queue length distribution at the beginning of a cycle in terms of the cycle time LST, since all customers that are present at the beginning of a cycle are exactly all of the customers that have arrived during the previous cycle:

\begin{equation}
P_1(z_{11}, \dots, z_{1K_1}, \dots, z_{N1}, \dots, z_{NK_N}) = \gamma_1(\sum_{i=1}^N\sum_{k=1}^{K_i}\lambda_{ik}(1-z_{ik})).\label{p1globallygated}
\end{equation}

\subsection{Cycle time}

Since only those customers that are present at the start of a cycle, starting at $Q_1$, will be served during this cycle, the LST of the cycle time distribution is
\begin{equation}
\gamma_1(\omega) = \prod_{i=1}^N\sigma_i(\omega) P_1(\beta_{11}(\omega), \dots, \beta_{1K_1}(\omega), \dots, \beta_{N1}(\omega), \dots, \beta_{NK_N}(\omega)).\label{lstcgloballygated}
\end{equation}
Substitution of \eqref{p1globallygated} into this expression gives us the following relation:
\[
\gamma_1(\omega) = \prod_{i=1}^N\sigma_i(\omega)\,\gamma_1(\sum_{i=1}^N\sum_{k=1}^{K_i}\lambda_{ik}(1-\beta_{ik}(\omega))).
\]
Boxma, Levy and Yechiali \cite{boxmalevyyechiali92} show (for a model without priorities) that this relation leads to the following expression for the cycle time LST:
\[
\gamma_1(\omega) = \prod_{i=0}^\infty \sigma(\delta^{(i)}(\omega)),
\]
where $\sigma(\cdot) = \prod_{i=1}^N\sigma_i(\cdot)$, and $\delta^{(i)}(\omega)$ is recursively defined as follows:
\begin{align*}
\delta^{(0)}(\omega) &= \omega,\\
\delta^{(i)}(\omega) &= \delta(\delta^{(i-1)}(\omega)), \qquad\qquad i=1,2,3,\dots,\\
\delta(\omega) &= \sum_{i=1}^N\sum_{k=1}^{K_i}\lambda_{ik}(1-\beta_{ik}(\omega)).
\end{align*}

\subsection{Marginal queue lengths and waiting times}

The expression for $E(\ee^{-\omega W_{ik}})$ can be obtained by the method used in Section \ref{gatedmarginalw}:
\begin{align}
E\left[\ee^{-\omega W_{ik}}\right] =\,&\prod_{j=1}^{i-1}\sigma_j(\omega)\cdot \big([\omega-\lambda_{ik}(1-\beta_{ik}(\omega))]E(C)\big)^{-1}\nonumber\\
&\cdot
\left[\gamma_1\big(\sum_{j=1}^{i-1}\lambda_{j}(1-\beta_{j}(\omega))
+\lambda_{H}(1-\beta_{H}(\omega))+\lambda_{ik}(1-\beta_{ik}(\omega))\big)\right.\nonumber\\
 &\qquad\left.- \gamma_1\big(\omega+\sum_{j=1}^{i-1}\lambda_{j}(1-\beta_{j}(\omega))+\lambda_{H}(1-\beta_{H}(\omega))\big)\right].\label{lstwikgloballygated}
\end{align}

We can use the distributional form of Little's law to determine the LST of the marginal queue length distribution of $Q_{ik}$. Substituting $\omega := \lambda_{ik}(1-z)$ in \eqref{lstwikgloballygated} yields an expression for $E\left[z^{L_{ik}}\right]$, the GF of the distribution of the number of type $ik$ customers at an arbitrary epoch.


\subsection{Moments}

We can obtain $E(W_{ik})$ by differentiation of \eqref{lstwikgloballygated}:
\begin{align*}
E(W_{ik}) &= \sum_{j=1}^{i-1}E(S_j) + (1 + 2\sum_{j=1}^{i-1} \rho_j + 2 \rho_H + \rho_{ik}) E(C_{1,\textit{res}})\\
&=\sum_{j=1}^{i-1}E(S_j) + (1 + 2\sum_{j=1}^{i-1}\sum_{l=1}^{K_j} \rho_{jl} + 2 \sum_{l=1}^{k-1} \rho_{il} + \rho_{ik}) E(C_{1,\textit{res}}).
\end{align*}
The interpretation of this expression is that the mean waiting time of a type $ik$ customer in a globally gated polling system consists of
the mean residual cycle time, the mean switch-over times $S_1 + \dots + S_{i-1}$, the mean service times of all type $1, \dots, i-1$ customers that arrive during the past cycle time \emph{plus} the residual cycle time, the mean service times of all type $i$ customers with higher priority level than $k$ that arrive during the past cycle time \emph{plus} the mean residual cycle time, and the mean service times of all type $i$ customers of priority level $k$ that have arrived during the past cycle time. Notice that the mean past cycle time is equal to the mean residual cycle time.

\section{Numerical example}

Consider a polling system with two queues, and assume exponential service times and switch-over times. Suppose that $\lambda_1 = \frac{6}{10}, \lambda_2 = \frac{2}{10}, E(B_1) = E(B_2) = 1, E(S_1) = E(S_2) = 1$. The workload of this polling system is $\rho = \frac{8}{10}$. This example is extensively discussed in \cite{winands06} where MVA is used to compute mean waiting times and mean residual cycle times for the gated and exhaustive service disciplines, and in \cite{boonadanboxma2queues2008}, where it is shown that the performance of this system can be improved by dividing jobs into two categories (small and large service times) and giving higher priority to jobs with smaller service times. In this example we illustrate how much further the performance of this system can be improved by increasing the number of priority levels in $Q_1$. Let $K_1$ be the number of priority levels in $Q_1$. We define thresholds $t_1, \dots, t_{K_1-1}$ that divide the jobs arriving in $Q_1$ into $K_1$ classes: jobs with a service time less than $t_1$ receive highest priority, jobs with a service time between $t_1$ and $t_2$ receive second highest priority, and so on. The thresholds $t_1, \dots, t_{K_1-1}$ are determined in such a way that the overall mean waiting time for customers in $Q_1$ is minimised. The values of these thresholds depend on the service discipline and are discussed in \cite{wierman07}. Figures \ref{fig:gated}, \ref{fig:exhaustive}, and \ref{fig:globallygated} show the overall mean waiting time for customers in $Q_1$ versus the number of priority levels $K_1$ for gated, exhaustive and globally gated service respectively. Notice that $K_1 = 1$ corresponds to a system without priorities in which customers are served FCFS. Both plots indicate that most  of the improvement is obtained for values of $K_1$ up to 3 or 4. Creating more than 4 priority levels hardly improves this system anymore. The situation $K_1 \rightarrow \infty$ corresponds to a system with shortest-job-first (SJF) policy, which results in a mean waiting time of 10.38 for gated service, 3.53 for exhaustive service, and 9.75 for globally gated service. The dotted horizontal lines in Figures \ref{fig:gated}, \ref{fig:exhaustive} and \ref{fig:globallygated} indicate these values.

From Figures \ref{fig:gated} -- \ref{fig:globallygated} we can conclude that, as far as overall mean waiting time is concerned, the introduction of priority levels significantly improves the performance of this polling system of two queues. An interesting question is whether the introduction of multiple priority levels still significantly improves the performance of a polling system consisting of more than two queues. In order to answer this question we compare polling systems with identical total occupation rate and identical total mean switch-over time. We only vary the number of queues $N$ and, in order to keep the total occupation rate constant, the arrival intensities $\lambda_i, i = 1,\dots,N$. All systems are symmetrical, which means that in each system $\lambda_i = \Lambda/N, \rho_i = \rho/N, E(B_i) = \beta, E(S_i)=E(S)/N, i=1,\dots,N$. For this example we use $\Lambda=\frac{8}{10}$, $B_i$ exponentially distributed with mean $\beta=1$, and deterministic switch-over times with $E(S)=2$. Figures \ref{fig:gated2} (gated) and \ref{fig:exhaustive2} (exhaustive) show the mean waiting time (which is identical for each queue, because of symmetry) versus the number of queues in the system. In each figure the mean waiting time is plotted for both the FCFS and the SJF policy.
The mean waiting time of SJF provides a lower bound for what can be achieved by introducing priority levels. A conclusion that can be drawn from both figures is that the relative improvement by adding priority levels decreases quickly when the number of queues increases. This suggests that the introduction of priority levels has less influence in a system with more queues, because the server spends a greater part of each cycle serving other queues. Another typical feature that is illustrated in Figures \ref{fig:gated2} and \ref{fig:exhaustive2} is that the mean waiting time of an arbitrary customer \emph{decreases} when the number of queues increases in a \emph{gated} polling system, whereas it \emph{increases} in an \emph{exhaustive} polling system. This can be verified using the pseudo-conservation law. For a polling system without priorities with FCFS service at each queue, Equation \eqref{pseudoconservationlawpriorities} reduces to
\[
\begin{aligned}
\sum_{i=1}^N\rho_{i} E(W_{i}) &=
\frac{\rho}{1-\rho} \sum_{i=1}^{N}\rho_{i}\frac{E(B_{i}^2)}{2E(B_{i})}\\
& + \rho \frac{E(S^2)}{2E(S)} +\left[\rho^2-\sum_{i=1}^N \rho_i^2\right]\frac{E(S)}{2(1-\rho)} + \sum_{i=1}^N E(Z_{ii}).
\end{aligned}
\]
The $Z_{ii}\ (i=1,\dots,N)$ depend on the service discipline that is used and are discussed in Subsection \ref{pseudoconservationlawsection}. For an exhaustive polling system, $E(Z_{ii})=0$, so if we substitute $\rho_i = \rho/N$, we obtain
\[
E(W_{i}) =
\frac{\rho}{1-\rho} \frac{E(B_{i}^2)}{2E(B_{i})} + \frac{E(S^2)}{2E(S)} +\left(1-\frac{1}{N}\right)\frac{\rho E(S)}{2(1-\rho)},
\]
which indeed gets larger as $N$ increases. For gated service, we have $E(Z_{ii})=\frac{\rho_i^2 E(S)}{1-\rho}$, which results in
\[
E(W_{i}) =
\frac{\rho}{1-\rho} \frac{E(B_{i}^2)}{2E(B_{i})} + \frac{E(S^2)}{2E(S)} +\left(1+\frac{1}{N}\right)\frac{\rho E(S)}{2(1-\rho)}.
\]
This expression indeed gets smaller as $N$ increases. In practice this means that mean waiting times can be reduced when gated service is used, by creating more queues and assign arriving customers to a random queue. It is however questionable whether this can be realised without changing the total switch-over time distribution. Notice that for $N \rightarrow \infty$ the mean waiting time in an exhaustive symmetric polling system is equal to the mean waiting time in a gated symmetric polling system.

\newpage
\begin{figure}[h!]
\begin{center}
\includegraphics[width=0.75\linewidth]{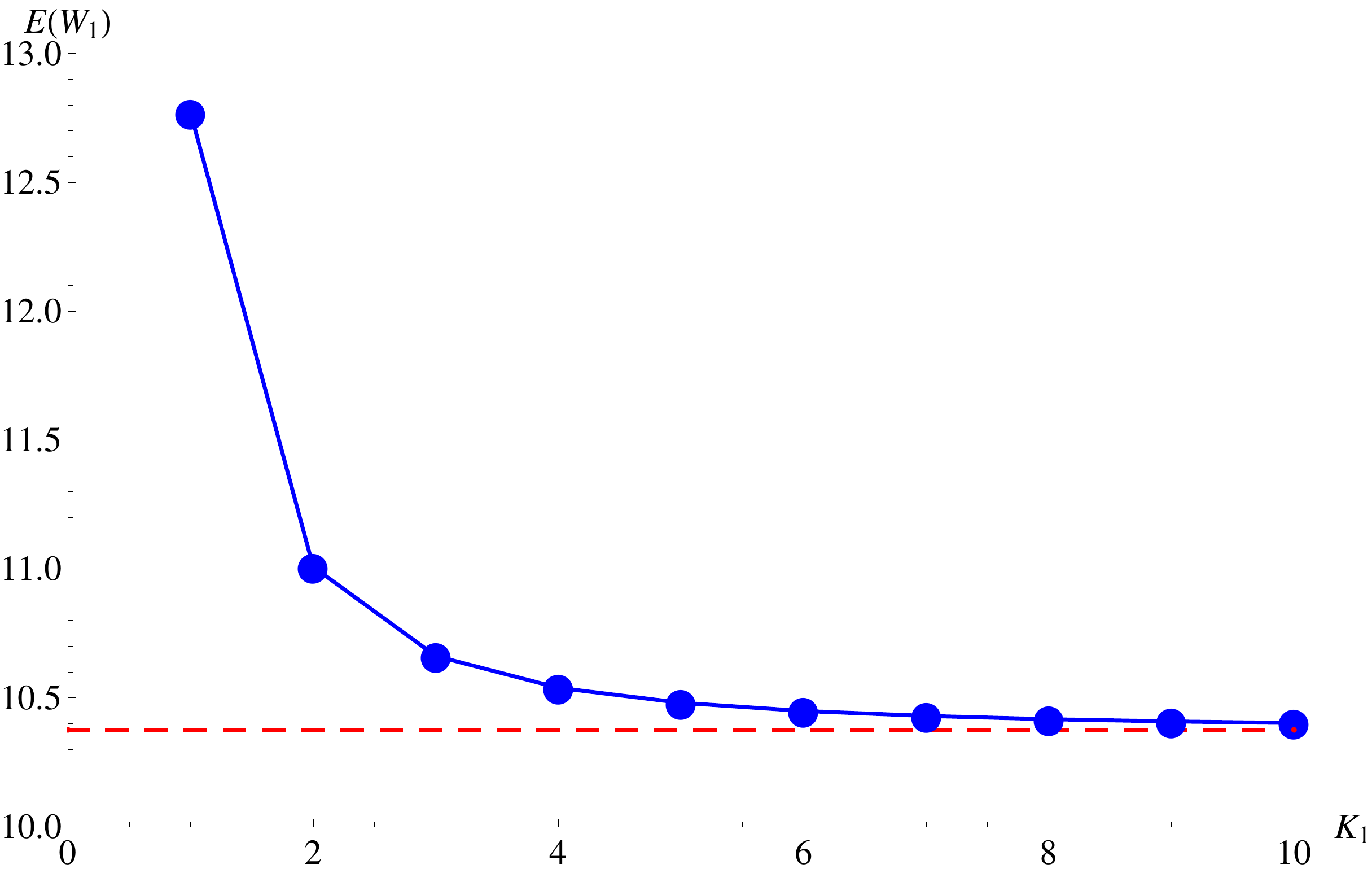}
\end{center}
\caption{Overall mean waiting time of customers in $Q_1$ in the gated polling system, versus number of priority levels $K_1$.\label{fig:gated}}
\end{figure}
\begin{figure}[h!]
\begin{center}
\includegraphics[width=0.75\linewidth]{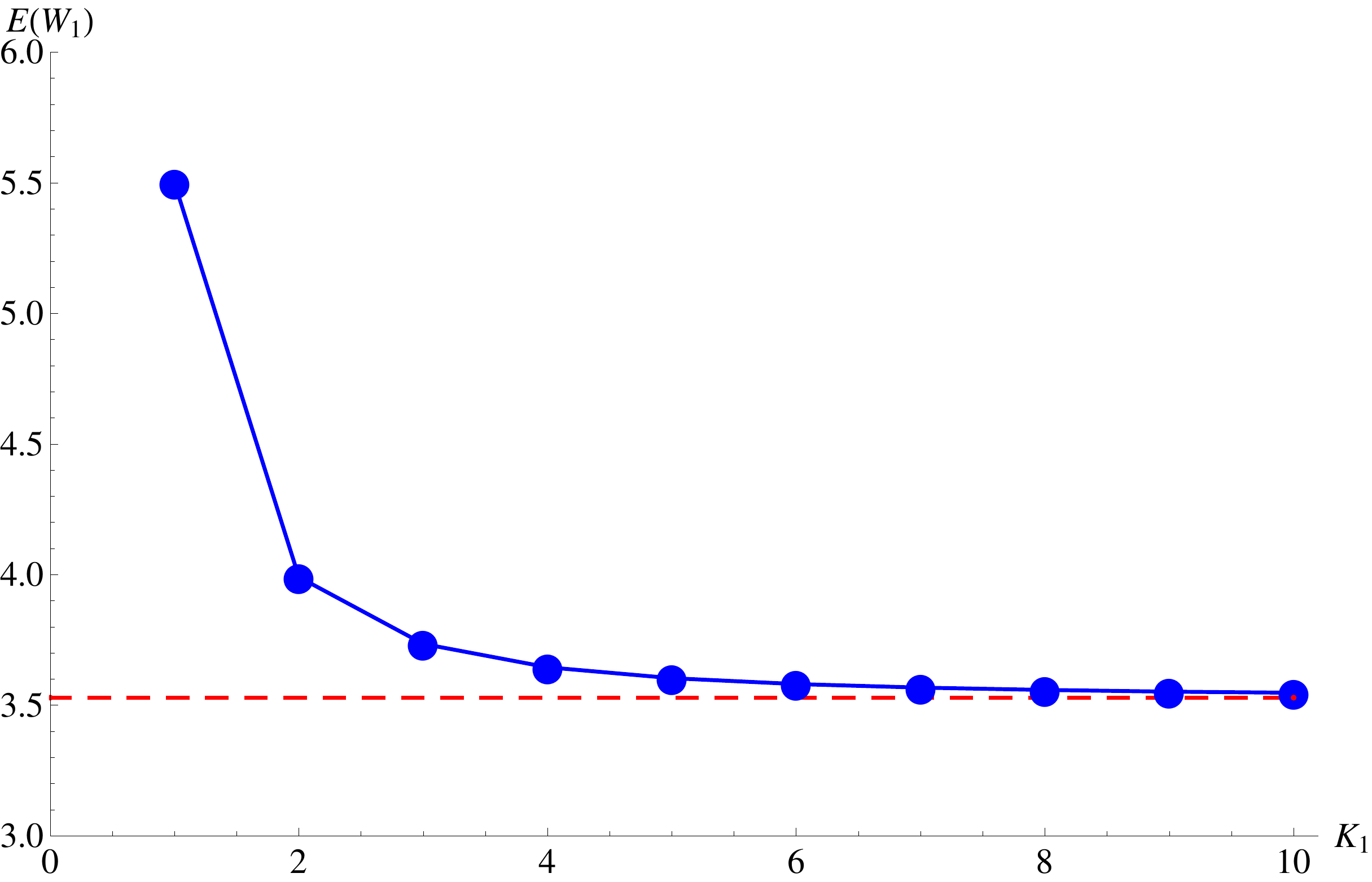}
\end{center}
\caption{Overall mean waiting time of customers in $Q_1$ in the exhaustive polling system, versus number of priority levels $K_1$.\label{fig:exhaustive}}
\end{figure}
\begin{figure}[h!]
\begin{center}
\includegraphics[width=0.75\linewidth]{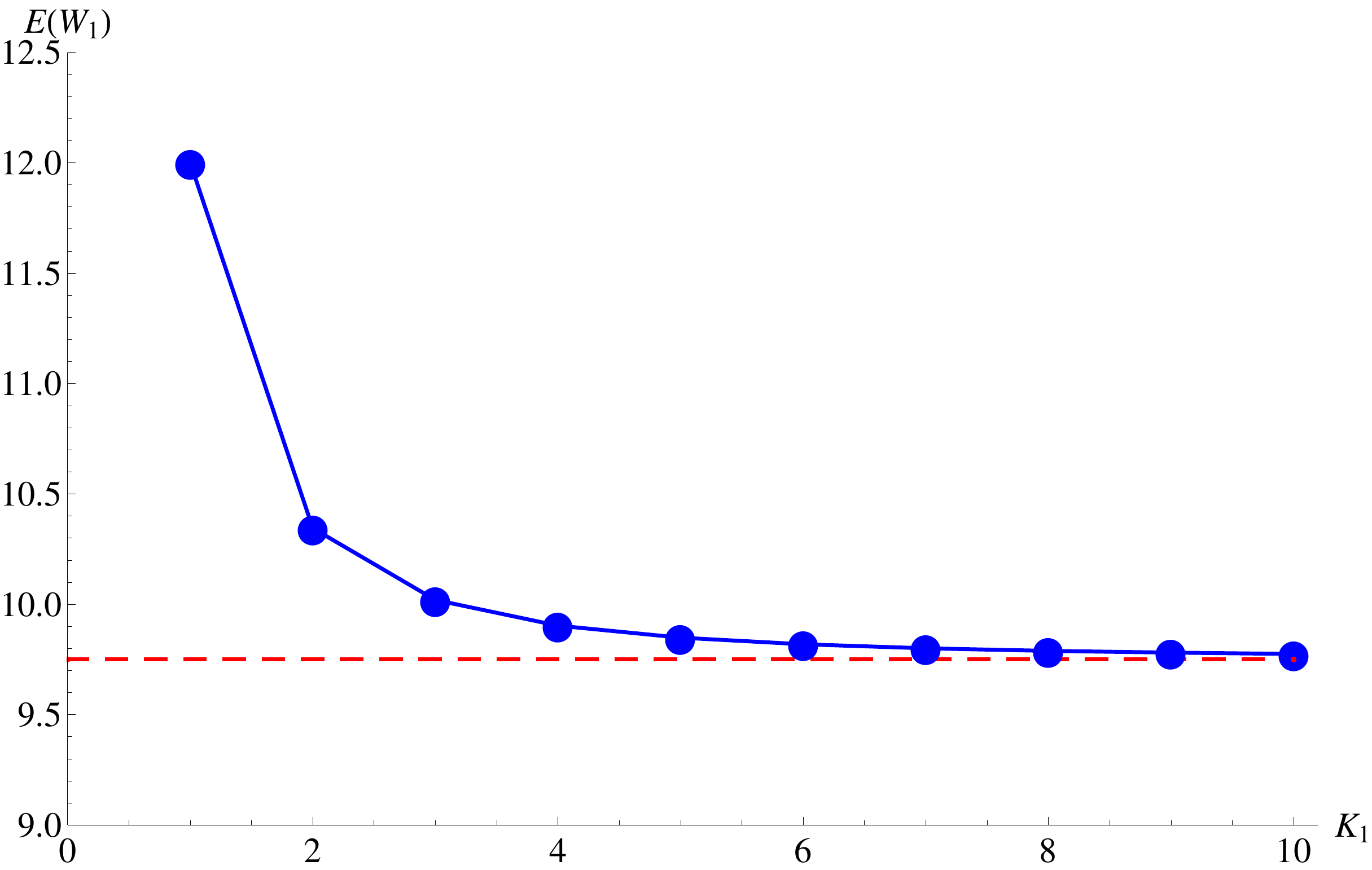}
\end{center}
\caption{Overall mean waiting time of customers in $Q_1$ in the globally gated polling system, versus number of priority levels $K_1$.\label{fig:globallygated}}
\end{figure}
\newpage
\begin{figure}[h!]
\begin{center}
\includegraphics[width=\linewidth]{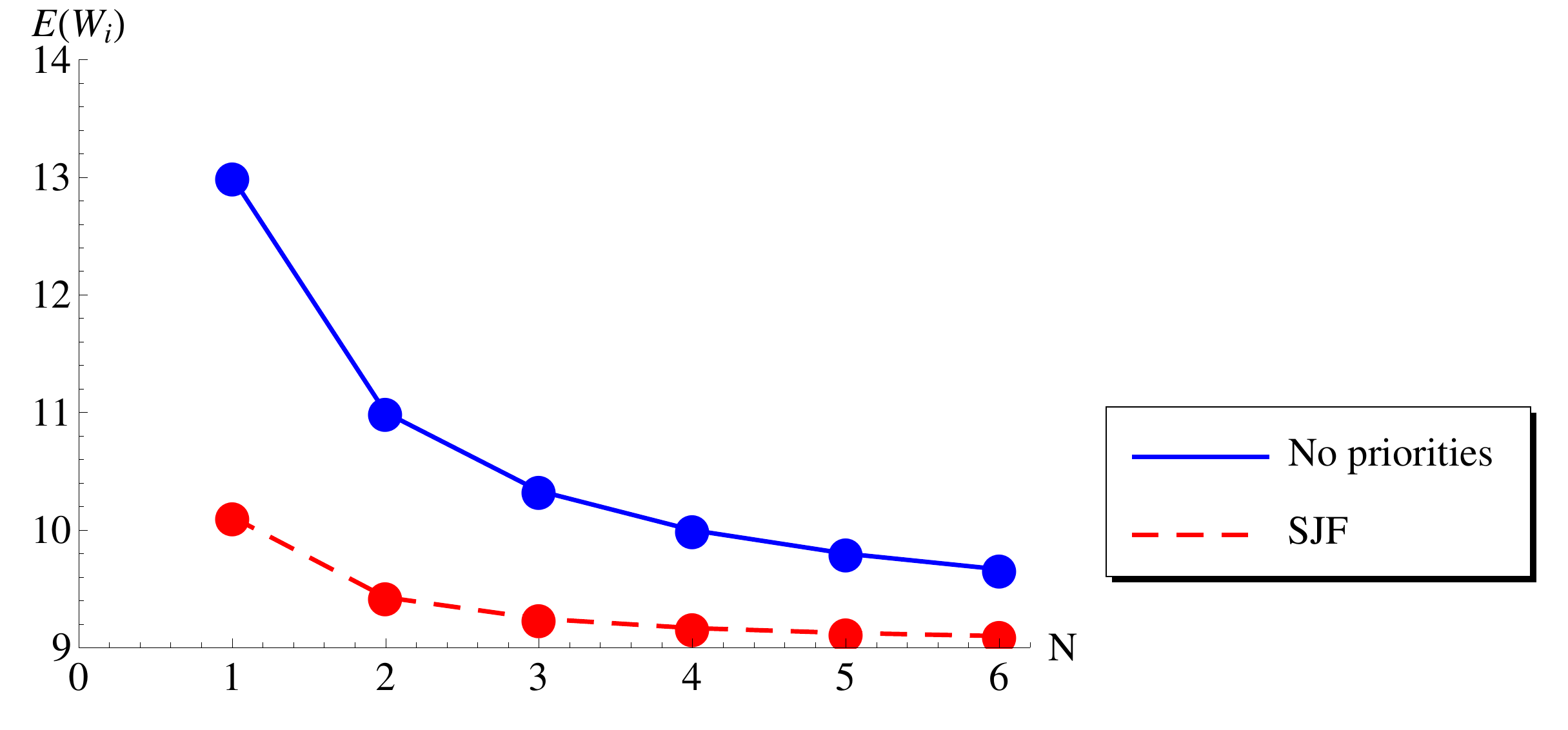}
\end{center}
\caption{Mean waiting time $E(W_i)$ in the gated polling system, versus number of queues.\label{fig:gated2}}
\end{figure}
\begin{figure}[h!]
\begin{center}
\includegraphics[width=\linewidth]{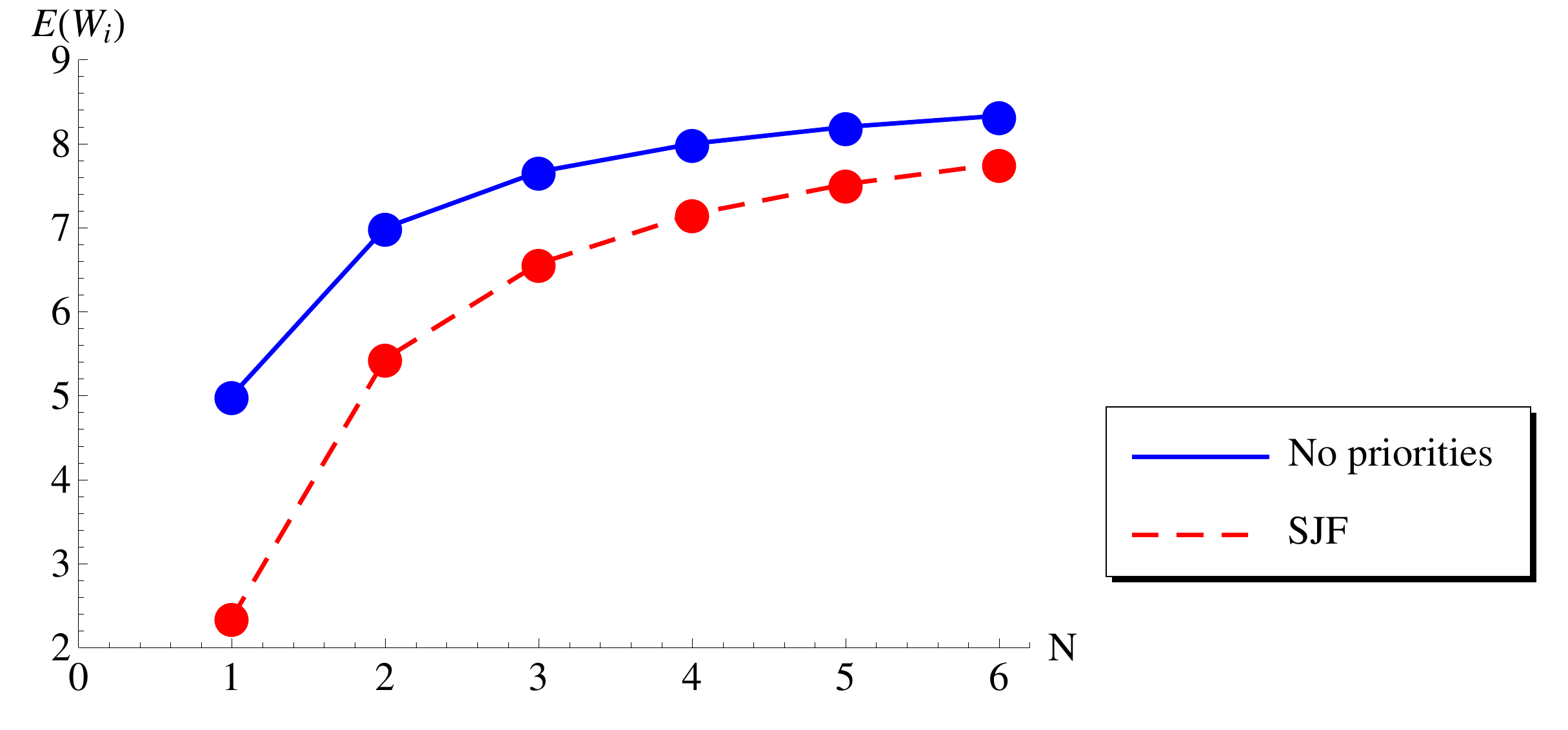}
\end{center}
\caption{Mean waiting time $E(W_i)$ in the exhaustive polling system, versus number of queues.\label{fig:exhaustive2}}
\end{figure}
\newpage
\bibliographystyle{abbrvnat}

\begin{thebibliography}{29}
\providecommand{\natexlab}[1]{#1}
\providecommand{\url}[1]{\texttt{#1}}
\expandafter\ifx\csname urlstyle\endcsname\relax
  \providecommand{\doi}[1]{doi: #1}\else
  \providecommand{\doi}{doi: \begingroup \urlstyle{rm}\Url}\fi

\bibitem[Boon et~al.(2008)Boon, Adan, and Boxma]{boonadanboxma2queues2008}
M.~A.~A. Boon, I.~J. B.~F. Adan, and O.~J. Boxma.
\newblock A two-queue polling model with two priority levels in the first
  queue.
\newblock \textsc{Eurandom} report 2008-016, \textsc{Eurandom}, 2008.
(to appear in Discrete Event Dynamic Systems, 2009)

\bibitem[Borst(1996)]{semphd}
S.~C. Borst.
\newblock \emph{Polling Systems}, volume 115 of \emph{CWI Tracts}.
\newblock 1996.

\bibitem[Borst and Boxma(1997)]{borst97}
S.~C. Borst and O.~J. Boxma.
\newblock Polling models with and without switchover times.
\newblock \emph{Operations Research}, 45\penalty0 (4):\penalty0 536 -- 543,
  1997.

\bibitem[Boxma(1989)]{boxmaworkloadsandwaitingtimes89}
O.~J. Boxma.
\newblock Workloads and waiting times in single-server systems with multiple
  customer classes.
\newblock \emph{Queueing Systems}, 5:\penalty0 185--214, 1989.

\bibitem[Boxma and Groenendijk(1987)]{boxmagroenendijk87}
O.~J. Boxma and W.~P. Groenendijk.
\newblock Pseudo-conservation laws in cyclic-service systems.
\newblock \emph{Journal of Applied Probability}, 24\penalty0 (4):\penalty0
  949--964, 1987.

\bibitem[Boxma et~al.(1992)Boxma, Levy, and Yechiali]{boxmalevyyechiali92}
O.~J. Boxma, H.~Levy, and U.~Yechiali.
\newblock Cyclic reservation schemes for efficient operation of multiple-queue
  single-server systems.
\newblock \emph{Annals of Operations Research}, 35\penalty0 (3):\penalty0
  187--208, 1992.

\bibitem[Boxma et~al.(2009)Boxma, Bruin, and Fralix]{boxmafralixbruin08}
O.~J. Boxma, J.~Bruin, and B.~H. Fralix.
\newblock Waiting times in polling systems with various service disciplines.
\newblock \emph{Performance Evaluation}, 66:\penalty0 621--639, 2009.

\bibitem[Choudhury and Whitt(1996)]{choudhurywhitt96}
G.~L. Choudhury and W.~Whitt.
\newblock Computing distributions and moments in polling models by numerical
  transform inversion.
\newblock \emph{Performance Evaluation}, 25\penalty0 (4):\penalty0 267--292,
  1996.

\bibitem[Cicin-Sain et~al.(2001)Cicin-Sain, Pearce, and Sunde]{cicin2001}
M.~Cicin-Sain, C.~E.~M. Pearce, and J.~Sunde.
\newblock On the application of a polling model with non-zero walk times and
  priority processing to a medical emergency-room environment.
\newblock \emph{Proceedings of the 23rd International Conference on Information
  Technology Interfaces, 2001}, 1:\penalty0 49--56, 2001.

\bibitem[Cohen(1982)]{cohen82}
J.~W. Cohen.
\newblock \emph{The Single Server Queue}.
\newblock North-Holland, Amsterdam, revised edition, 1982.

\bibitem[Cooper and Murray(1969)]{coopermurray69}
R.~B. Cooper and G.~Murray.
\newblock Queues served in cyclic order.
\newblock \emph{The Bell System Technical Journal}, 48\penalty0 (3):\penalty0
  675--689, 1969.

\bibitem[Eisenberg(1972)]{eisenberg72}
M.~Eisenberg.
\newblock Queues with periodic service and changeover time.
\newblock \emph{Operations Research}, 20\penalty0 (2):\penalty0 440--451, 1972.

\bibitem[Fournier and Rosberg(1991)]{fournierrosberg91}
L.~Fournier and Z.~Rosberg.
\newblock Expected waiting times in polling systems under priority disciplines.
\newblock \emph{Queueing Systems}, 9\penalty0 (4):\penalty0 419--439, 1991.

\bibitem[Fuhrmann(March 1981)]{fuhrmann81}
S.~W. Fuhrmann.
\newblock Performance analysis of a class of cyclic schedules.
\newblock Technical memorandum 81-59531-1, Bell Laboratories, March 1981.

\bibitem[Fuhrmann and Cooper(1985)]{fuhrmanncooper85}
S.~W. Fuhrmann and R.~B. Cooper.
\newblock Stochastic decompositions in the {$M/G/1$} queue with generalized
  vacations.
\newblock \emph{Operations Research}, 33\penalty0 (5):\penalty0 1117--1129,
  1985.

\bibitem[Keilson and Servi(1990)]{keilsonservi90}
J.~Keilson and L.~D. Servi.
\newblock The distributional form of {Little's Law} and the {Fuhrmann-Cooper}
  decomposition.
\newblock \emph{Operations Research Letters}, 9\penalty0 (4):\penalty0
  239--247, 1990.

\bibitem[Kella and Yechiali(1988)]{kellayechiali88}
O.~Kella and U.~Yechiali.
\newblock Priorities in {$M/G/1$} queue with server vacations.
\newblock \emph{Naval Research Logistics}, 35:\penalty0 23--34, 1988.

\bibitem[Khamisy et~al.(1992)Khamisy, Altman, and Sidi]{khamisy92}
A.~Khamisy, E.~Altman, and M.~Sidi.
\newblock Polling systems with synchronization constraints.
\newblock \emph{Annals of Operations Research}, 35:\penalty0 231 -- 267, 1992.

\bibitem[Levy and Sidi(1990)]{levysidi90}
H.~Levy and M.~Sidi.
\newblock Polling systems: applications, modeling, and optimization.
\newblock \emph{IEEE Transactions on Communications}, 38:\penalty0 1750--1760,
  1990.

\bibitem[Quine(1970)]{quine70}
M.~P. Quine.
\newblock The multi-type {G}alton-{W}atson process with immigration.
\newblock \emph{Journal of Applied Probability}, 7\penalty0 (2):\penalty0
  411--422, 1970.

\bibitem[Resing(1993)]{resing93}
J.~A.~C. Resing.
\newblock Polling systems and multitype branching processes.
\newblock \emph{Queueing Systems}, 13:\penalty0 409 -- 426, 1993.

\bibitem[Shimogawa and Takahashi(1988)]{shimogawatakahashi88}
S.~Shimogawa and Y.~Takahashi.
\newblock A pseudo-conservation law in a cyclic-service system with priority
  classes.
\newblock \emph{IEICE Research Report}, \penalty0 (IN88-86):\penalty0 13--18,
  1988.

\bibitem[Tak\'acs(1968)]{takacs68}
L.~Tak\'acs.
\newblock Two queues attended by a single server.
\newblock \emph{Operations Research}, 16\penalty0 (3):\penalty0 639--650, 1968.

\bibitem[Takagi(1988)]{takagi1988qap}
H.~Takagi.
\newblock Queuing analysis of polling models.
\newblock \emph{ACM Computing Surveys (CSUR)}, 20:\penalty0 5--28, 1988.

\bibitem[Takagi(1990)]{takagi90}
H.~Takagi.
\newblock Priority queues with setup times.
\newblock \emph{Operations Research}, 38\penalty0 (4):\penalty0 667--677, 1990.

\bibitem[Vishnevskii and Semenova(2006)]{vishnevskiisemenova06}
V.~M. Vishnevskii and O.~V. Semenova.
\newblock Mathematical methods to study the polling systems.
\newblock \emph{Automation and Remote Control}, 67\penalty0 (2):\penalty0
  173--220, 2006.

\bibitem[Wierman et~al.(2007)Wierman, Winands, and Boxma]{wierman07}
A.~Wierman, E.~M.~M. Winands, and O.~J. Boxma.
\newblock Scheduling in polling systems.
\newblock \emph{Performance Evaluation}, 64:\penalty0 1009--1028, 2007.

\bibitem[Winands(2007)]{winandsPhD}
E.~M.~M. Winands.
\newblock \emph{Polling, Production \& Priorities}.
\newblock PhD thesis, Eindhoven University of Technology, 2007.

\bibitem[Winands et~al.(2006)Winands, Adan, and van Houtum]{winands06}
E.~M.~M. Winands, I.~J. B.~F. Adan, and G.-J. van Houtum.
\newblock Mean value analysis for polling systems.
\newblock \emph{Queueing Systems}, 54:\penalty0 35--44, 2006.

\end{thebibliography}

\end{document}